\def\BibTeX{{\rm B\kern-.05em{\sc i\kern-.025em b}\kern-.08em
    T\kern-.1667em\lower.7ex\hbox{E}\kern-.125emX}}
\newcommand{\B}[1]{\mathbf{#1}}
\newcommand{\vecv}[1]{\mathbf{#1}}
\newcommand{\inc}{^\textup{inc}}
\newcommand{\scat}{^\textup{scat}} 
\newcommand{\tot}{^\textup{tot}}
\newcommand{\ii}{\mathrm{i}}
\newcommand{\e}{\mathrm{e}}
\newcommand{\vcurl}[1][]{\textup{\textbf{curl}}\ifthenelse{\equal{#1}{}}{}{_#1}}
\newcommand{\exterior}{^\textup{+}}
\newcommand{\SLO}{\vecv{S}_{\kappa}}
\newcommand{\DLO}{\vecv{C}_{\kappa}}
\newcommand{\SLOh}{\vecv{S}_{\kappa, h}}
\newcommand{\be}{\vecv{e}}
\newcommand{\norm}[2]{\left\|#1\right\|_{#2}}
\newcommand{\magnetic}{\gamma_{N}}
\newcommand{\tangential}{\gamma_{t}}
\newcommand{\half}{\frac{1}{2}}
\newcommand{\curl}{\textbf{curl}}
\newcommand{\sdiv}{\textbf{Div}_{\Gamma}}
\newcommand{\sgrad}{\textbf{Grad}_{\Gamma}}
\newcommand{\hsgrad}{\textbf{Grad}_{\Gamma_{h}}}
\newcommand{\scurl}{\text{curl}_{\Gamma}}
\newcommand{\hscurl}{\text{curl}_{\Gamma_{h}}}
\newcommand{\vscurl}{\textbf{curl}_{\Gamma}}
\newcommand{\tikzAngleOfLine}{\tikz@AngleOfLine}
  \def\tikz@AngleOfLine(#1)(#2)#3{%
  \pgfmathanglebetweenpoints{%
    \pgfpointanchor{#1}{center}}{%
    \pgfpointanchor{#2}{center}}
  \pgfmathsetmacro{#3}{\pgfmathresult}%
  }
\newtheorem{theorem}{Theorem}
\title{An OSRC Preconditioner for the EFIE}
\author{ 
    Ignacia Fierro-Piccardo \\
    Department of Mathematics\\
    University College London\\ 
    London WC1E 6BT, U.K.\\
    \texttt{ucahmib@ucl.ac.uk} \\
    %% examples of more authors
    \And
    Timo Betcke \\
    Department of Mathematics\\
    University College London\\ 
    London WC1E 6BT, U.K.\\
    \texttt{t.betcke@ucl.ac.uk} \\
    %% \AND
    %% Coauthor \\
    %% Affiliation \\
    %% Address \\
    %% \texttt{email} \\
    %% \And
    %% Coauthor \\
    %% Affiliation \\
    %% Address \\
    %% \texttt{email} \\
    %% \And
    %% Coauthor \\
    %% Affiliation \\
    %% Address \\
    %% \texttt{email} \\
}
\begin{document}
\maketitle

\begin{abstract}
    The Electric Field Integral Equation (EFIE) is a well-established tool to solve electromagnetic scattering problems. However, the development of 
    efficient and easy to implement preconditioners remains an active research area. In recent years, operator preconditioning 
    approaches have become popular for the EFIE,  where the electric field boundary integral operator is regularised by multiplication with another 
    convenient operator. A particularly intriguing choice is the exact Magnetic-to-Electric (MtE) operator as regulariser. 
    But, evaluating this operator is as expensive as solving the original EFIE. In work by El Bouajaji, Antoine and 
    Geuzaine, approximate local Magnetic-to-Electric surface operators for the time-harmonic Maxwell equation were proposed. These
    can be efficiently evaluated through the solution of sparse problems.  This paper demonstrates the preconditioning 
    properties of these approximate MtE operators for the EFIE. The implementation is described and a number of numerical 
    comparisons against other preconditioning techniques for the EFIE are presented to demonstrate the effectiveness of this 
    new technique.
\end{abstract}

% keywords can be removed
\keywords{Preconditioner, OSRC approximation, Electric Field Integral Equation.}

\section{Introduction}
The numerical simulation of time-harmonic waves scattered by perfect electric conductors (PECs) is of 
fundamental importance across the spectrum of electromagnetic applications.

Denote by $\be\inc$ an incident field. We are looking for the solution $\be\tot=\be\inc + \be\scat$ of the exterior scattering 
problem, that satisfies:

\label{eq:maxwell_system}
\begin{subequations}
 \begin{align}
  \vcurl\,\vcurl\,\be\tot-\kappa^2\be\tot &= 0&&\text{in }\Omega\exterior, \label{eq:maxwell_pde} \\
 \be\tot\times \pmb\nu&=0&&\text{on }\Gamma,\label{eq:maxwell_bnd}\\
\lim_{|\B{x}|\rightarrow\infty}|\B{x}|\left(\vcurl\,\be\scat\times\frac{\B{x}}{|\B{x}|}-\ii \kappa\be\scat\right) &= 0
.\label{eq:maxwell_silver_mueller}
\end{align}
 \label{eq:maxwell}
\end{subequations}

\noindent Here, $\kappa=\omega\sqrt{\epsilon_0\mu_0}$ denotes the
wavenumber of the problem, with $\omega$ denoting the frequency and
$\epsilon_0$ and $\mu_0$ the electric permittivity and magnetic
permeability in vacuum.  The PEC object is denoted by $\Omega^{-}\subset \mathbb{R}^{3}$ and it is enclosed by a smooth  
boundary $\Gamma = \delta \Omega^{-}$, also $\Omega^{+} = \mathbb{R}^{3}\setminus \overline{\Omega^-}$ denotes the propagation 
medium.  Frequently, the incident field is a plane wave
given by $\be\inc=\B{p}\e^{\ii \kappa \B{x}\cdot\B{d}}$, where $\B{p}$ is a
non-zero vector representing the polarisation of the wave, $\B{d}$
is a unit vector perpendicular to $\B{p}$ that gives the direction of
the plane wave and $\pmb\nu$ denotes the unit normal vector which is orthogonal
to the local tangent plane to the surface of the scatterer.

An integral equation formulation of this problem leads to an operational equation of the form

\begin{equation}\label{eq:efie}
\begin{split}
\SLO\vecv{u} = - \left(\frac{\vecv{I}}{2}+ \DLO\right)\vecv{f},
\end{split}
\end{equation}

with $\SLO$ the electric field integral operator, $\DLO$  the
magnetic field integral operator, $\vecv{f}$ the tangential trace of the incident data and $\vecv{u}$ the solution to the system (we will define all these quantities in Section \ref{sect:ops}). The above is a direct formulation, 
but one could equally choose an indirect formulation. See \cite{Buffa2003} for details.

For moderate mesh sizes, the discretisation of \eqref{eq:efie} can easily be solved
by LU decomposition. As the mesh width decreases, iterative solvers become necessary though, but are hampered by the
ill-conditioning of $\SLO$ after discretisation. A strategy to deal with this issue is to introduce a regularisation operator $\vecv{R}$ such that
the new operator system,
\begin{equation}\label{eq:prec_efie}
\begin{split}
\vecv{R}\SLO\vecv{u} = - \vecv{R}\left(\frac{\vecv{I}}{2}+ \DLO\right)\vecv{f},
\end{split}
\end{equation}
leads to well-conditioned discretisations. 

The most common example of $\vecv{R}$ is the Calderón Multiplicative Preconditioner \cite{andriulli2008multiplicative}. The drawback 
of this method, however, is the need to evaluate discrete operator products. To illustrate this, assume a
function $\phi$ in some Hilbert space, and operators $\vecv{A}$ and $\vecv{B}$ in
compatible Hilbert spaces. In order to evaluate the product
$\psi = \vecv{AB}\phi$
through Galerkin discretisations of the operators $\vecv{A}$ and $\vecv{B}$, we need
to compute a finite dimensional matrix product of the form
$\psi_{h} =
\vecv{A}_{h}\vecv{M}_{h}^{-1}\vecv{B}_{h}\phi_{h},
$
where we have used the subscript $_{h}$ to denote finite dimensional quantities after
discretisation \cite{Betcke20}. The matrix $\vecv{M}_{h}$ is a mass matrix, which
contains the inner product of the test space of $\vecv{B}_{h}$ and the domain
space of $\vecv{A}_{h}$. The difficulty is that this mass matrix is numerically
singular for the standard choice of Rao-Wilton-Glisson (RWG) basis functions in electromagnetic
scattering \cite{buffa2007dual}. In order to overcome this problem, one can
use the so-called Buffa-Christiansen (BC) bases as the range space of the EFIE operator \cite{andriulli2008multiplicative}. 
However, their use is expensive, since the construction of BC functions requires barycentric
mesh refinements (see Fig. \ref{fig:meshes} for reference).

\begin{figure}[h!]
    \begin{center}
    \begin{tikzpicture}[scale=2]
        \path (0,0) coordinate (origin);
        \path (0:1cm) coordinate (P0);
        \path (1*72:1cm) coordinate (P1);
        \path (2*72:1cm) coordinate (P2);
        \path (3*72:1cm) coordinate (P3);
        \path (4*72:1cm) coordinate (P4);
      
        % Pentagon edges
        \draw (P0) -- (P1) -- (P2) -- (P3) -- (P4) --cycle;
        % Spokes
        \draw (origin) -- (P0)  (origin) -- (P1)
              (origin) -- (P2)  (origin) -- (P3)
              (origin) -- (P4);
      \end{tikzpicture}
    \hspace{1cm}
    \begin{tikzpicture}[scale=2]
        \path (0,0) coordinate (origin);
        \path (0:1cm) coordinate (P0);
        \path (1*72:1cm) coordinate (P1);
        \path (2*72:1cm) coordinate (P2);
        \path (3*72:1cm) coordinate (P3);
        \path (4*72:1cm) coordinate (P4);
        \path (1*108:0.51cm) coordinate (P5);
         \path (1*108:0.81cm) coordinate (P6);
         \path (1*72:0.45cm) coordinate (P7);
          \path (2*72:0.45cm) coordinate (P8);
        \path (180:0.51cm) coordinate (P9);
         \path (180:0.81cm) coordinate (P10);
         \path (3*72:0.45cm) coordinate (P11);
         \path (252:0.51cm) coordinate (P12);
         \path (252:0.81cm) coordinate (P13);
         \path (4*72:0.45cm) coordinate (P14);
         \path (324:0.51cm) coordinate (P15);
         \path (324:0.81cm) coordinate (P16);
         \path (5*72:0.45cm) coordinate (P17);
         \path (36:0.51cm) coordinate (P18);
         \path (36:0.81cm) coordinate (P19);
         \path (6*72:0.45cm) coordinate (P20);
        % Pentagon edges
        \draw (P0) -- (P1) -- (P2) -- (P3) -- (P4) --cycle;
        % Spokes
        \draw (origin) -- (P0)  (origin) -- (P1)
              (origin) -- (P2)  (origin) -- (P3)
              (origin) -- (P4);
          \draw[dotted]
              (origin)--(P5)
              (P2)--(P5)
              (P1)--(P5)
              (P5)--(P6)
              (P5)--(P7)
              (P5)--(P8)
              (origin)--(P9)
              (P2)--(P9)
              (P3)--(P9)
              (P9)--(P8)
              (P9)--(P10)
              (P9)--(P11)
              (origin)--(P12)
              (P12)--(P13)
              (P12)--(P14)
              (P12)--(P11)
              (P12)--(P3)
              (P12)--(P4)
              (origin)--(P15)
              (P15)--(P4)
              (P15)--(P0)
              (P15)--(P16)
              (P15)--(P17)
              (P15)--(P14)
              (origin)--(P18)
              (P18)--(P0)
              (P18)--(P1)
              (P18)--(P17)
              (P18)--(P19)
              (P18)--(P20);
      \end{tikzpicture}   
    \caption{Primal and barycentric (dashed lines) meshes.} 
    \label{fig:meshes}
\end{center}
    \end{figure}
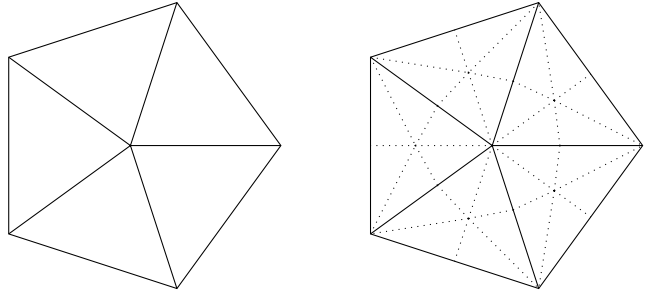

The thrust of this paper is to use the Magnetic-to-Electric (MtE) operator, while avoiding expensive barycentric mesh refinements. With this
choice, the left-hand side of \eqref{eq:prec_efie} becomes a second kind integral operator, which in theory, will make it more amenable 
to iterative solvers. However, a direct evaluation of the MtE operator has a similar complexity to solving the original scattering problem and is, therefore, impractical.
The idea of the On-Surface-Radiation-Condition (OSRC) approach \cite{kriegsmann1987new} is to obtain a high frequency approximation of the MtE operator
from a radiation condition applied on the surface of the scatterer.

In recent work by Bouajaji, Antoine, and Geuzaine \cite{el2014approximate}, the authors approximated the high-frequency symbol of the MtE operator by a Pad\'{e} expansion \cite{milinazzo1997rational} that
can be discretised using sparse surface operators and thereby, be efficiently evaluated. Earlier work in the 2D electromagnetic and acoustic cases is included in \cite{antoine2006improved,antoine2007generalized,darbas13}.

In this research we follow the aforementioned approach for approximating the MtE operator and investigate
its practical suitability as a regulariser for the EFIE.  We also demonstrate details of the
implementation and the resulting preconditioning performance, with
respect to assembly and solving times.

This paper is organised as follows: in section \ref{sect:PS} an overview of function spaces for Maxwell's boundary integral 
operators is presented. Section \ref{sect:MtEP} shows the MtE preconditioner, its continuous implementation,
approximations, its discrete implementation and simplifications connsidered.  
Section \ref{sect:NExp} presents a validation of this preconditioner and performance benchmarks, with concluding remarks
in Section \ref{sect:CR}.
  
\section{Problem setting, Tangential Sobolev Spaces and Surface Operators}\label{sect:PS}

We will start by introducing some required surface differential operators for the later sections. We will then discuss the function space setting and the discrete representation. Finally, we will introduce the required boundary operators.

\subsection{Surface differential operators}

In this section we briefly introduce the required surface differential operators. For a more complete technical definition see e.g. \cite{nedelec}. For the sake of the definitions here we assume a smooth bounded domain $\Omega$ with sufficiently smooth boundary $\Gamma$.

Let $u$ be a sufficiently smooth scalar vector field defined on $\Gamma$ and $\vecv{v}$ a sufficiently smooth tangential vector field defined on $\Gamma$, that is $\vecv{v}\cdot\pmb{\nu} = 0$. Furthermore, denote by $\tilde{u}$ and $\tilde{\vecv{v}}$ suitable extensions into 
a neighbourhood $\Gamma_{\epsilon}$ of $\Gamma$ with $\vecv{\tilde{v}}$ also requiring that it is tangential to surfaces in $\Gamma_{\epsilon}$ parallel to $\Gamma$. We define the following operators.

\begin{itemize}
\item The surface gradient
$$
\vecv{Grad}_{\Gamma}:= \nabla\tilde{u}|_{\Gamma}.
$$
\item The tangential curl
$$
\vecv{curl}_{\Gamma} := \vecv{Grad}_{\Gamma}u\times \pmb{\nu}.
$$
\item The surface divergence
$$
\vecv{Div}_{\Gamma}\;\vecv{v} :=(\text{div}\;\tilde{\vecv{v}})|_{\Gamma}.
$$
\item The surface curl
$$
\text{curl}_{\Gamma}\; \vecv{v}:= \pmb{\nu}\cdot (\text{curl}\; \tilde{\vecv{v}})|_{\Gamma}.
$$
\end{itemize}

We have the following identities (see \cite[Theorem 2.5.19]{nedelec})

$$
\begin{aligned}
\int_{\Gamma}\vecv{Grad}_{\Gamma}u\cdot \vecv{v}\;d\Gamma &= -\int_{\Gamma}u\,\vecv{Div}_{\Gamma}\,\vecv{v}\;d\Gamma,\\
\int_{\Gamma}\left(\vecv{curl}_{\Gamma}\,u\cdot \vecv{v}\right)d\Gamma &= \int_{\Gamma}u\,\text{curl}_{\Gamma}\vecv{v}\,d\Gamma,\\
\vecv{Div}_{\Gamma}\,\vecv{curl}_{\Gamma}\,u &= 0,\\
\text{curl}_{\Gamma}\vecv{Grad}_{\Gamma}u &= 0,\\
\vecv{Div}_{\Gamma}\,(\vecv{v}\times \pmb{\nu}) &= \text{curl}_{\Gamma}\,\vecv{v}.
\end{aligned}
$$

Moreover, we can define the scalar surface Laplace operator as
$$
\Delta_{\Gamma} u:= \vecv{Div}_{\Gamma}\;\vecv{Grad}_{\Gamma} u = -\text{curl}_{\Gamma}\;\vecv{curl}_{\Gamma} u,
$$
and the vectorial surface Laplace operator as
$$
\Delta_{\Gamma}\vecv{v} := \vecv{Grad}_{\Gamma}\;\vecv{Div}_{\Gamma}\vecv{v} - \vecv{curl}_{\Gamma}\;\text{curl}_{\Gamma}\vecv{v}
$$
(see \cite[2.5.191 \& 2.5.192]{nedelec}).

\subsection{Function Spaces}

Consider sufficiently smooth vector fields $\vecv{u}$ and $\vecv{v}$ such that the following implication of Green's formula makes sense:

\begin{equation}\label{eq:dual_prod}
\displaystyle \int_{\Omega}(\vecv{u}\cdot \curl\; \vecv{v}- \vecv{v}\cdot \curl\; \vecv{u}) \; d\Omega= \int_{\Gamma} \tangential \vecv{u} \cdot \vecv{v}|_{\Gamma} \; d\Gamma.
\end{equation}

The operator $\tangential\vecv{u}$ is the tangential trace: the product $\vecv{u}\times \pmb{\nu}$ taken on the boundary $\Gamma$.

Now define the tangential component trace $\pi_{t}$ as 

$$
\pi_t\vecv{v} := \vecv{v} - (\pmb{\nu}\cdot{\vecv{v}})\pmb{\nu} = \pmb{\nu}\times(\tangential \vecv{v})
$$

on $\Gamma$.

Equation \eqref{eq:dual_prod} introduces a duality relationship between $\tangential$ and $\pi_t$, since

$$
 \int_{\Gamma} \tangential \vecv{u} \cdot \vecv{v}|_{\Gamma} \; d\Gamma =  \int_{\Gamma} \tangential \vecv{u} \cdot \pi_t\vecv{v} \; d\Gamma.
$$

Moreover, the same formula motivates a self-duality for tangential traces through
$$
\int_{\Gamma} \tangential \vecv{u} \cdot \pi_t\vecv{v} \; d\Gamma = \int_{\Gamma}\tangential\vecv{u}\cdot(\pmb{\nu}\times \tangential\vecv{v})\; d\Gamma=:\langle\tangential\vecv{u}, \tangential\vecv{v}\rangle_{\times}.
$$

From \eqref{eq:dual_prod} it follows that the dual form $\langle\cdot, \cdot\rangle_{\times}$ makes sense for tangential traces of functions whose curl is well defined.

In the early 2000s the underlying ideas were made precise in the context of Sobolev spaces on bounded Lipschitz domains (see \cite{BuCa01, buffa2002traces}). A beautiful summary is also given in the overview paper \cite{Buffa2003}.

Here, we just summarise the key result with respect to the trace of $\tangential$ and the dual form $\langle\cdot, \cdot\rangle_\times$.

Let $\Omega$ be bounded (the case of an unbounded domain is similar). Define
$$
\vecv{H}^{s}(\vecv{curl}, \Omega) := \{\vecv{u}\in \vecv{H}^s(\Omega)|\;\vecv{curl}\;\vecv{u}\in \vecv{H}^s(\Omega)\}
$$
as the usual space for weak solutions of Maxwell's equations. In the following, we will denote $\mathbf{H} = \mathbf{H}^{0} $.
Then, one can define the space $\vecv{H}_{\times}^{-\frac{1}{2}}(\sdiv, \Gamma)$ of tangential traces on the boundary such that the mapping 
$\gamma_t:\vecv{H}(\vecv{curl}, \Omega)\rightarrow \vecv{H}_{\times}^{-\frac{1}{2}}(\sdiv, \Gamma)$ is continuous and surjective.
Moreover, this space is self-dual with respect to the dual form $\langle\cdot, \cdot\rangle_\times$ \cite{Buffa2003}. Correspondingly, 
we introduce the space $\vecv{H}_{\times}^{-\frac{1}{2}}(\scurl, \Gamma)$ as the continuous and surjective range of the map 
$\pi_t:\vecv{H}(\vecv{curl}, \Omega)\rightarrow \vecv{H}_{\times}^{-\frac{1}{2}}(\scurl, \Gamma)$. There is an isomorphism 
$\pmb{\Theta}:\vecv{H}_{\times}^{-\frac{1}{2}}(\sdiv, \Gamma)\rightarrow  \vecv{H}_{\times}^{-\frac{1}{2}}(\scurl, \Gamma)$ whose 
geometric interpretation on smooth boundaries is that $\pmb{\nu}\times \phi = \psi$ for some $\phi\in  \vecv{H}_{\times}^{-\frac{1}{2}}(\sdiv, \Gamma)$ 
and $\psi\in \vecv{H}_{\times}^{-\frac{1}{2}}(\scurl, \Gamma)$. The notations $\vecv{Div}_{\Gamma}$ and $\text{curl}_{\Gamma}$ in 
the names of the spaces make clear that they are weakly surface div and surface curl conforming, respectively. For precise definitions
see \cite{buffa2002traces}.

We will also require the usual scalar surface spaces $H^{1/2}(\Gamma)$ and $H^{-1/2}(\Gamma)$. The former can be interpreted as space of scalar Dirichlet data and the latter as space of scalar (weak) normal derivatives.
For all traces jumps and average operators can be defined as

\begin{align*}
  \left[\gamma\right]_{\Gamma} &= \gamma^{+}-\gamma^{-}\\
  \left\lbrace\gamma\right\rbrace_{\Gamma} &= \frac{\gamma^{+}+\gamma^{-}}{2}
\end{align*}

In order to build the discrete problem, we need to define discrete representations of  $\vecv{H}_{\times}^{-\frac{1}{2}}(\sdiv, \Gamma)$ and $\vecv{H}_{\times}^{-\frac{1}{2}}(\scurl, \Gamma)$.

Consider a polyhedral approximation $\Gamma_{h}$ of $\Gamma$ with a triangulation $\mathcal{T}_{h} = \cup_{l=1}^{N_{T}}T^{l}$,
we denominate as Raviart-Thomas (RT) the space of linear edge finite elements defined by the basis functions

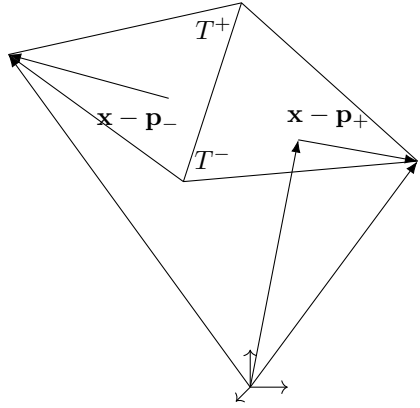
\begin{figure}[t]
    \begin{center}
    \begin{tikzpicture}[scale=2.5]
        \path (0,0) coordinate (origin);
        \path (5:1.25cm) coordinate (P0);
        \path (1*72:1cm) coordinate (P1);
        \path (2*72:1.15cm) coordinate (P2);
        \path (4*72:1.15cm) coordinate (P3);
        \path (20:0.65cm) coordinate (P4);
        \path (100:0.45cm) coordinate (P5);
      
        % Pentagon edges
        \draw (P0) -- (P1) -- (P2);
        % Spokes
        \draw (origin) -- (P0)  (origin) -- (P1)
              (origin) -- (P2);
         \draw[-Latex](P4)--(P0);
         \draw[-Latex] (P3) -- (P0);
         \draw[-Latex](P3) -- (P2);
         \draw[-Latex](P3) -- (P4);  
         \draw[-Latex](P5) -- (P2);    
         \tikzAngleOfLine(origin)(P0){\AngleStart}
        \tikzAngleOfLine(origin)(P1){\AngleEnd}
        \tikzAngleOfLine(P2)(P1){\AngleEndd}
        \node[circle] at ($(origin)+({(\AngleStart+\AngleEnd)/2}:0.20cm)$) {$T^{-}$};
        \node[circle] at ($(P1)+({(\AngleEnd+\AngleEndd)/2}:-0.20cm)$) {$T^{+}$};
        \node[circle] at ($(P4)+({(\AngleEnd)/2}:0.20cm)$) {$\vecv{x} -\vecv{p} _{+}$};
        \node[circle] at ($(P5)+({(\AngleEnd)/2}:-0.20cm)$) {$\vecv{x} -\vecv{p} _{-}$};
        \draw[->] (P3)-- ($(P3)+(0.2,0,0)$) coordinate(X);
        \draw[->]  (P3) -- ($(P3)+(0,0.2,0)$) coordinate(Y);
        \draw[->]  (P3) -- ($(P3)+(0,0,0.2)$) coordinate(Z);
    
      \end{tikzpicture}
    \end{center}
    \caption{Definition of a RT basis function on an edge of the mesh} 
    \label{fig:rtbasis}
\end{figure}

$$
\vecv{RT}_{i}(\vecv{x})=  \begin{cases} \frac{1}{2A_{i}^{+}}(\vecv{x} -\vecv{p} _{+}) \;\; \text{ if } \vecv{x} \in  T_{+} \\ -\frac{1}{2A_{i}^{-}}(\vecv{x}-\vecv{p}_{-}) \text{ if } \vecv{x} \in T_{-} \\ \vecv{0} \text{, otherwise }  \end{cases}, \;\;i \in \left\lbrace 1, \dots, N_{e} \right\rbrace
$$

(see, for example, Fig. \ref{fig:rtbasis}), where $N_{e}$ is the number of edges in $\mathcal{T}_{h}$. 

We define Nédélec (NC) basis functions as the set of rotated RT basis functions:

$$\vecv{NC}_i:=\pmb\nu(\vecv{x})\times \vecv{RT}_{i}(\vecv{x}) $$

The RT and NC basis functions form discrete bases of the dual pair $\vecv{H}_{\times}^{-\frac{1}{2}}(\sdiv, \Gamma)$ and $\vecv{H}_{\times}^{-\frac{1}{2}}(\scurl, \Gamma)$.

We also define the Rao-Wilton-Glisson (RWG) basis functions (\cite{rao1982electromagnetic}) as a scaling of RT basis functions:

$$\vecv{RWG}_i:=l_{i} \vecv{RT}_{i}(\vecv{x}), $$

where $l_{i}$ is the length of the edge $i$ and the Scaled Nédélec (SNC) basis functions as the rotation of RWG:

$$\vecv{SNC}_i:=\pmb\nu(\vecv{x})\times \vecv{RWG}_{i}(\vecv{x})$$

Just like the pair RT-NC, RWG and SNC form discrete bases of the dual pair $\vecv{H}_{\times}^{-\frac{1}{2}}(\sdiv, \Gamma)$ and $\vecv{H}_{\times}^{-\frac{1}{2}}(\scurl, \Gamma)$.

Finally, we define piecewise linear basis functions (P1) on a reference element:

$$f_{i}(\xi, \eta) = \begin{cases}  1-\xi-\eta &\text{ for vertex 1}\\ \xi &\text{ for vertex 2}\\ \eta &\text{ for vertex 3}\end{cases}, \;\;i \in \left\lbrace 1, \dots, N_{v} \right\rbrace,$$

where $N_{v}$ is the number of vertices in $\mathcal{T}_{h}$. These are also called \emph{roof} basis functions; we intend to use them to discretise $H^{\frac{1}{2}}(\Gamma)$ and $H^{-\frac{1}{2}}(\Gamma)$ 
in the upcoming sections. 
%
%
%\begin{figure}[h!]
%   \centering
%   \includegraphics[width=6cm]{./figures/P1_basis.pdf}
%   \caption{Reference element with vertices numbering.}
%   \label{fig:P1_basis}
%\end{figure}

In table \ref{tab:summary_basis} we summarise the notation used for the different sets of basis functions.

\begin{table}[h!]
\begin{center}
\begin{tabular}{|c|c|c|c|}
\hline
 Acronym & Type & Dofs & Discretises  \\ \hline
 RT    & Vectorial & Edges &$\vecv{H}_{\times}^{-\frac{1}{2}}(\sdiv, \Gamma)$ \\ \hline
 NC    & Vectorial & Edges &$\vecv{H}_{\times}^{-\frac{1}{2}}(\scurl,\Gamma)$  \\ \hline
 RWG   & Vectorial & Edges &$\vecv{H}_{\times}^{-\frac{1}{2}}(\sdiv, \Gamma)$  \\ \hline
 SNC   & Vectorial & Edges &$\vecv{H}_{\times}^{-\frac{1}{2}}(\scurl,\Gamma)$ \\ \hline
 P1& Scalar & Vertices &$H^{\frac{1}{2}}(\Gamma)$ and $H^{-\frac{1}{2}}(\Gamma)$  \\\hline
\end{tabular}
\end{center}
\caption{Summary of basis functions.}
\label{tab:summary_basis} 
\end{table}

\subsection{Operators}\label{sect:ops}

To solve (1a)-(1c), the Stratton-Chu representation formula \cite[Theorem 3.27]{kirsch2014mathematical} must be considered for any $\vecv{x}\in \Omega^{+}$

\begin{equation}\label{eq:rep_formula}
\be(\vecv{x}):= - \mathcal{T}(\magnetic^{+}\be)(\vecv{x})- \mathcal{K}(\tangential^{+}\be)(\vecv{x}).
\end{equation}
 
Here $\tangential^{+}\be$ in physical terms represents the surface magnetic current and consequently, $\magnetic^{+}\be$ is the surface electric current
\cite[equation (2.336)]{osipov2017modern}, defined by $\gamma_N^{+}\vecv{u}:=(i\kappa)^{-1}{{\tangential}^+ \curl\; \vecv{u}}$ (and similarly for the interior magnetic trace).
$\mathcal{T}, \mathcal{K}: \vecv{H}_{\times}^{-\frac{1}{2}}(\sdiv, \Gamma)\rightarrow \vecv{H}_{loc}(\curl^{2}, \Omega^{+})$ are defined as:

\begin{align*}
\mathcal{T}(\vecv{p})(\vecv{x})&:= i\kappa \int_{\Gamma} \vecv{p}(\vecv{y})\vecv{G}(\vecv{x}, \vecv{y})\\
& - \frac{1}{i\kappa}\nabla_{\vecv{x}}\int_{\Gamma}\vecv{G}(\vecv{x}, \vecv{y}) \sdiv\vecv{p}(\vecv{y}) d\Gamma(\vecv{y}),\\
\mathcal{K}(\vecv{p})(\vecv{x})&:= \curl_{\vecv{x}}\int_{\Gamma}\vecv{G}(\vecv{x}, \vecv{y}) \vecv{p}(\vecv{y}) d\Gamma(\vecv{y})
\end{align*}

With $\vecv{G}(\vecv{x}, \vecv{y}):= \frac{e^{i\kappa\norm{\vecv{x}-\vecv{y}}{}}}{4 \pi \norm{\vecv{x}-\vecv{y}}{}}$, $\vecv{x}\neq \vecv{y}$.\\

By applying magnetic and tangential traces to the Electric and Magnetic field potential operators, the electric and magnetic BIOs (Boundary Integral Operators)
$\SLO, \DLO: \vecv{H}_{\times}^{-\frac{1}{2}}(\sdiv, \Gamma)\rightarrow \vecv{H}_{\times}^{-\frac{1}{2}}(\sdiv, \Gamma)$ can be obtained:\\

\begin{equation*}
\begin{aligned}
\{\tangential\}_{\Gamma}\mathcal{T}&= \SLO & \left[\tangential\right]_{\Gamma}\mathcal{T}&= 0 \\
\{\tangential\}_{\Gamma}\mathcal{K}&= \DLO &  \left[\tangential\right]_{\Gamma}\mathcal{K}&= -\vecv{I} \\
\{\magnetic\}_{\Gamma}\mathcal{T}&= \DLO & \left[\magnetic\right]_{\Gamma}\mathcal{T}&= -\vecv{I}  \\
\{\magnetic\}_{\Gamma}\mathcal{K}&= -\SLO &\left[\magnetic\right]_{\Gamma}\mathcal{K}&= 0 
\end{aligned}
\end{equation*}

\begin{equation*}
\begin{aligned}
\tangential^{-}\mathcal{T}&= \SLO & \tangential^{+}\mathcal{T}&= \SLO\\
\tangential^{-}\mathcal{K}&= \frac{\vecv{I}}{2}+\DLO & \tangential^{+}\mathcal{K}&= -\frac{\vecv{I}}{2}+\DLO\\
\magnetic^{-}\mathcal{T}&= \frac{\vecv{I}}{2}+\DLO & \magnetic^{+}\mathcal{T}&= -\frac{\vecv{I}}{2}+\DLO\\
 \magnetic^{-}\mathcal{K}&= -\SLO & \magnetic^{+}\mathcal{K}&= -\SLO
\end{aligned}
\end{equation*}

Then, applying electric and magnetic traces to the representation formula  \eqref{eq:rep_formula}, the following can be derived:

\begin{equation}\label{eq:calderon_projector}
\mathcal{C}^{\pm} = \begin{bmatrix}
\frac{\vecv{I}}{2}\mp \DLO  & \mp \SLO\\
\pm \SLO& \frac{\vecv{I}}{2} \mp \DLO \\
\end{bmatrix} \begin{bmatrix}
\tangential^{\pm}\vecv{u}\\
\magnetic^{\pm}\vecv{u}\\
\end{bmatrix} 
 = \begin{bmatrix}
\tangential^{\pm}\vecv{u}\\
\magnetic^{\pm}\vecv{u}\\
\end{bmatrix}.
\end{equation}

The operator $\mathcal{C}^{\pm}$ is called Calder\'{o}n Projector. It describes the relationship of the electric and magnetic tangential traces on the boundary $\Gamma$.
An important property, directly following from the Stratton-Chu representation formula is that $\left(\mathcal{C}^{\pm}\right)^{2} = \mathcal{C}^{\pm}$, which implies
\begin{equation}\label{eq:CP}
\SLO^{2} =\DLO^{2}- \frac{\vecv{I}}{4}.
\end{equation}

This is the basis for Calder\'{o}n Preconditioning. It states that $\SLO^{2}$ is a compact perturbation of the identity on
sufficiently smooth domains, with eigenvalues clustering around the point $1/4$. Hence, under a suitable discretisation of this
operator, iterative solvers are expected to converge quickly. The difficulties of building the discrete version arise when trying to build Gram matrices $\vecv{G}$ to 
implement the discrete product $\SLOh \vecv{G}^{-1}\SLOh$. For standard RWG spaces, the matrix $\vecv{G}$ is singular \cite{andriulli2008multiplicative}. To overcome this
problem, in  \cite{andriulli2008multiplicative} a Calder\'{o}n multiplicative preconditioner (CMP) was proposed based on the use of BC basis functions that are defined
on barycentric refinements of the original grid (see \cite{buffa2007dual} for more details and Fig. \ref{fig:meshes} for reference). While the improvement in iterative 
solver convergence with this preconditioner is excellent, the implementation requires the assembly of operators on
grids with six times as many elements as the original grid. Acceleration techniques such as the Fast Multipole Method (FMM) \cite{greengard1987fast} make this more manageable. 
Still, this is significantly more costly than the assembly on the original grid.

A recent approach to implement a Calderón Preconditioner is shown in \cite{adrian2019refinement}, where the authors aim to 
build a Multiplicative Calderón Preconditioner immune to the low frequency breakdown induced by the use of RWG basis functions. 
Here, the authors perform a quasi-Helmholtz decomposition of the EFIE in the static limit by using loop-star basis functions 
\cite{vecchi1999loop} obtained from linear combinations of RWG basis functions, thus avoiding BC basis functions.

The main drawback of this technique, however, is the need to solve a dense matrix system as part of the application of the 
preconditioner. The authors have pointed out that this can be achieved by preconditioning this dense matrix with specific methods, 
making it competitive with the original CMP. However, the implementation effort of this approach is substantial.
In this paper we demonstrate that OSRC based preconditioners achieve similar performance, without requiring barycentric refinements and with an implementation that only requires the solution of sparse linear systems that are straightforward to assemble.

\section{Construction of an OSRC Preconditioner}\label{sect:MtEP}

We start by reviewing the preconditioning properties the EtM (Electric-to-Magnetic) operator $\vecv{V}$ and its inverse: the MtE operator $\vecv{V}^{-1}$. Then, we review the Pad\'{e} approximation 
approach for these operators, obtained from \cite{el2014approximate}, and finally describe in detail how to use these operators as discrete preconditioners. We assume that both $\SLO$ and $\frac{1}{2}\vecv{I}+\DLO$ are invertible 
(the operators have no resonance at the wavenumber $\kappa$).

\subsection{OSRC operator as a preconditioner for the EFIE}

The EtM operator can be derived from the first row of \eqref{eq:calderon_projector}

\begin{align*}
 -\SLO^{-1}\left(\frac{\vecv{I}}{2}+\DLO \right)\tangential^{+}\vecv{u} = \magnetic^{+}\vecv{u}.
\end{align*}

Hence, the EtM and its inverse, the MtE operator, are given by:

\begin{multicols}{2}
  \begin{equation}\label{eq:etm1}
   \vecv{V}_{(1)} = -\SLO^{-1}\left(\frac{\vecv{I}}{2}+\DLO \right),
  \end{equation}\break
  \begin{equation}\label{eq:mte1}
    \vecv{V}_{(1)} ^{-1} = - \left(\frac{\vecv{I}}{2}+\DLO \right)^{-1} \SLO.
  \end{equation}
\end{multicols}

Alternatively, from the second row of \eqref{eq:calderon_projector}

\begin{align*}
\left(\frac{\vecv{I}}{2}+\DLO \right)^{-1}\SLO\tangential^{+}\vecv{u} = \magnetic^{+}\vecv{u}.
\end{align*}

A second version of the EtM exact operator and its inverse (MtE) can be obtained:

\begin{multicols}{2}
  \begin{equation}\label{eq:etm2}
   \vecv{V}_{(2)}  = \left(\frac{\vecv{I}}{2}+\DLO \right)^{-1}\SLO,
  \end{equation}\break
  \begin{equation}\label{eq:mte2}
    \vecv{V}_{(2)} ^{-1} = \SLO^{-1} \left(\frac{\vecv{I}}{2}+\DLO \right).
  \end{equation}
\end{multicols}

It is necessary to remark that the discrete versions of the pairs \eqref{eq:etm1}, \eqref{eq:mte2} and \eqref{eq:mte1}, \eqref{eq:etm2} are not necessarily the same,
since they might be defined on different discrete spaces. See \cite{scroggs2017software} for details.

To discriminate versions of the $\vecv{V}^{-1}$ operator, we have used the subscripts $_{(1)}$ and $_{(2)}$.

\begin{theorem}\label{theo:osrc_prec}

Assume that $\vecv{V}_{(1)}$ and $\vecv{V}_{(2)}$ and their inverses exist as defined above. It holds that
\begin{equation*}
\SLO\vecv{V}^{-1}_{(2)}  \equiv \left(\frac{\vecv{I}}{2}+\DLO\right)
\end{equation*}

and

\begin{equation*}
\vecv{V}^{-1}_{(1)}\SLO  \equiv \left(\frac{\vecv{I}}{2}-\DLO\right).
\end{equation*}

\end{theorem}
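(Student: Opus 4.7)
The plan is to prove both identities by direct substitution of the definitions \eqref{eq:mte1} and \eqref{eq:mte2}, together with the Calderón identity \eqref{eq:CP}. The first identity will be essentially immediate from the definition of $\vecv{V}^{-1}_{(2)}$, while the second will require the squared-operator relation $\SLO^{2} = \DLO^{2} - \frac{\vecv{I}}{4}$ to simplify a product.

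For the first identity, I would substitute $\vecv{V}^{-1}_{(2)} = \SLO^{-1}\left(\frac{\vecv{I}}{2}+\DLO\right)$ from \eqref{eq:mte2} directly into $\SLO\vecv{V}^{-1}_{(2)}$. The factor $\SLO$ cancels against $\SLO^{-1}$, leaving $\frac{\vecv{I}}{2}+\DLO$. The only thing to note is that this cancellation is legitimate on the appropriate trace spaces because we have assumed at the start of the section that $\SLO$ is invertible.

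For the second identity, I would substitute $\vecv{V}^{-1}_{(1)} = -\left(\frac{\vecv{I}}{2}+\DLO\right)^{-1}\SLO$ from \eqref{eq:mte1} into $\vecv{V}^{-1}_{(1)}\SLO$, yielding $-\left(\frac{\vecv{I}}{2}+\DLO\right)^{-1}\SLO^{2}$. Now apply \eqref{eq:CP} to replace $\SLO^{2}$ by $\DLO^{2}-\frac{\vecv{I}}{4}$, and factor this as
\[
\DLO^{2}-\tfrac{\vecv{I}}{4} \;=\; \left(\tfrac{\vecv{I}}{2}+\DLO\right)\left(\DLO-\tfrac{\vecv{I}}{2}\right),
\]
using that $\DLO$ commutes with scalar multiples of $\vecv{I}$. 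The factor $\left(\frac{\vecv{I}}{2}+\DLO\right)^{-1}$ then cancels (legitimate by the standing invertibility assumption on $\frac{\vecv{I}}{2}+\DLO$), leaving $-\left(\DLO-\frac{\vecv{I}}{2}\right) = \frac{\vecv{I}}{2}-\DLO$.

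I do not anticipate any real obstacle here: both statements reduce to symbolic manipulation on operator products, and the only substantive ingredient beyond the definitions is the Calderón identity, which is already established as \eqref{eq:CP}. The one place to be slightly careful is the order of factors, since $\SLO$ and $\DLO$ need not commute individually; however, since only $\DLO$ (and scalar identities) appear in the factorisation of $\DLO^{2}-\frac{\vecv{I}}{4}$, commutativity issues do not arise, and the cancellation is valid.
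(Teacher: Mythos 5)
Your proposal is correct and follows essentially the same route as the paper: the first identity by direct cancellation of $\SLO\SLO^{-1}$, and the second by substituting the Calder\'on identity \eqref{eq:CP} and factoring $\DLO^{2}-\frac{\vecv{I}}{4}$ as $-\left(\frac{\vecv{I}}{2}+\DLO\right)\left(\frac{\vecv{I}}{2}-\DLO\right)$, which is exactly your factorisation up to the placement of the minus sign. Your added remarks about where the invertibility assumptions are used and why no commutativity issues arise are accurate but not needed beyond what the paper states.
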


\begin{proof}

The first relationship follows from

\begin{equation*}
\SLO\vecv{V}^{-1}_{(2)} =\SLO\SLO^{-1} \left(\frac{\vecv{I}}{2}+\DLO \right) =  \left(\frac{\vecv{I}}{2}+\DLO \right) 
\end{equation*}

For the second identity we obtain 
\begin{equation*}
\vecv{V}_{(1)}^{-1}\SLO =- \left(\frac{\vecv{I}}{2}+\DLO \right)^{-1} \SLO \SLO.
\end{equation*}
To obtain the desired result we use \eqref{eq:CP} and replace $\SLO^{2} = -\left(\frac{\vecv{I}}{2}+\DLO\right)\left(\frac{\vecv{I}}{2}-\DLO\right)$.

\end{proof}
It follows that on sufficiently smooth domains applying $\vecv{V}_{(1)}^{-1}$ or $\vecv{V}_{(2)}^{-1}$ from the left/right to $\SLO$, respectively,
results in a compact perturbation of the identity.

From now on, we drop the subscripts, and we refer to $\vecv{V}_{(1)}^{-1}$ just as $\vecv{V}^{-1}$. 

We have seen that $\vecv{V}^{-1}$ is a good candidate for a preconditioner.
However, the construction of $\vecv{V}^{-1}$ is as expensive as solving the EFIE, so finding a good approximation is essential.

\subsection{Approximation of the MtE operator}

In \cite{el2014approximate}, it is shown that an approximation for the EtM on smooth surfaces is given by \cite{el2014approximate}

\begin{equation}\label{eq:appr2014}
\magnetic^{+}\vecv{u}\approx   -\pmb{\Lambda}_{1,\varepsilon}^{-1}\pmb{\Lambda}_{2,\varepsilon}(\pmb{\nu} \times \tangential^{+}\vecv{u}) = -\pmb{\Lambda}_{1,\varepsilon}^{-1}\pmb{\Lambda}_{2,\varepsilon}\pmb{\Theta}\tangential^{+}\vecv{u}\;\; \text{ on } \Gamma,
\end{equation}

where%. According to the mapping 
%properties of $\tangential$ and $\magnetic^{+}$, we have that:

\begin{equation*}
\pmb{\Lambda}_{2,\varepsilon} := \vecv{I} - \vscurl \frac{1}{\kappa^{2}_{\varepsilon}}\scurl,%:  \vecv{H}_{\times}^{-\frac{1}{2}}(\scurl,\Gamma)\rightarrow  \vecv{H}_{\times}^{-\frac{1}{2}}(\scurl,\Gamma),
\end{equation*}

and 

\begin{equation*}
\pmb{\Lambda}_{1,\varepsilon} := (\vecv{I}+\mathcal{J})^{1/2}, %:   \vecv{H}_{\times}^{-\frac{1}{2}}(\sdiv,\Gamma)\rightarrow  \vecv{H}_{\times}^{-\frac{1}{2}}(\scurl,\Gamma)
\end{equation*}

with

\begin{equation}\label{eq:J_def}
\mathcal{J} := \sgrad \frac{1}{\kappa^{2}_{\varepsilon}}\sdiv - \vscurl \frac{1}{\kappa^{2}_{\varepsilon}}\scurl.
\end{equation} 

Notice here that we have used the term $\kappa_{\varepsilon} = \kappa + i \varepsilon$ instead of $\kappa$. The term $\varepsilon>0$ is a damping parameter used by the 
authors in \cite{el2014approximate} to avoid singularities in the square root operator, whose optimal value is given by $\varepsilon = 0.39\kappa^{\frac{1}{3}}R^{-\frac{2}{3}}$ (R being the curvature radius of the surface).

Finally, we write the approximation of the EtM operator as  $\vecv{V}_{\varepsilon}:=  -\pmb{\Lambda}_{1,\varepsilon}^{-1}\pmb{\Lambda}_{2,\varepsilon}\pmb{\Theta}$ and 
by taking the inverse, the approximate MtE can also be found: $\vecv{V}^{-1}_{\varepsilon}:= -\pmb{\Theta}^{-1} \pmb{\Lambda}_{2,\varepsilon}^{-1}\pmb{\Lambda}_{1,\varepsilon}$.

\subsection{Approximation of $V_{\varepsilon}$ by surface differential operators}

The operator $\pmb{\Lambda}_{2,\varepsilon} $ can be discretised into a sparse matrix that 
can be readily inverted using sparse LU decomposition. However, $\pmb{\Lambda}_{1,\varepsilon}$ is a
pseudo-differential operator  whose calculation is more involved. In \cite{el2014approximate} a rotating branch cut Pad\'{e} approximation of the form

\begin{equation}\label{eq:pade_app}
(1+z)^{\half} \approx   R_{0}-\sum_{j=1}^{N_{p}}\frac{A_{j}}{B_{j}(1+B_{j}z)}
\end{equation}
is proposed. Let $\alpha=\frac{\pi}{2}$; the coefficients $A_{j}$ and $B_{j}$ are given by

\begin{align*}
A_{j} &= \frac{e^{-i\alpha/2}a_{j}}{[1+b_{j}(e^{-i\alpha}-1)]^2},\\
B_{j} &=\frac{b_{j}e^{-i\alpha}}{1+b_{j}(e^{-i\alpha}-1)},
\end{align*}
with $a_{j} = \frac{2}{2N_{p} +1}\sin^{2}\left(\frac{j\pi}{2 N_{p} +1} \right)$, $b_{j} = \cos^{2}\left(\frac{j\pi}{2 N_{p} +1} \right)$.

For $R_0$ we have
\begin{equation*}
    R_{0} = C_{0}+ \sum_{j=1}^{N_p}\frac{A_{j}}{B_{j}},
\end{equation*}
with
\begin{equation*}
    C_{0} = e^{i\alpha/2}\left(1+\sum_{j=1}^{N_p}\frac{a_{j}(e^{-i\alpha}-1)}{1+b_{j}(e^{-i\alpha}-1)}\right).
\end{equation*}

For more details on the derivation of these parameters we refer to \cite{milinazzo1997rational}.

Applying \eqref{eq:pade_app} to approximate $\pmb{\Lambda}_{1,\varepsilon} = (\vecv{I}+\mathcal{J})^{1/2}$, we obtain the operator 
\begin{equation*}
\tilde{\pmb{\Lambda}}_{1,\varepsilon} = \left(\vecv{I}R_{0}-\sum_{j=1}^{N_{p}}\frac{A_{j}}{B_{j}} (\vecv{I}+B_{j}\mathcal{J})^{-1}\right).
\end{equation*}
As a simplification, we introduce $\pmb{\Pi}_{j}:=\vecv{I}+B_{j}\mathcal{J}$,  and by substituting $\pmb{\Lambda}_{1,\varepsilon}$ with $\tilde{\pmb{\Lambda}}_{1,\varepsilon}$ in the MtE operator $\vecv{V}_{\varepsilon}^{-1}$, we obtain the Pad\'{e} approximate MtE operator:

\begin{equation}\label{eq:laplace_app}
\begin{split}
    \tilde{\vecv{V}}_{\varepsilon, N_p}^{-1}:=-\pmb{\Theta}^{-1}\pmb{\Lambda}_{2}^{-1}\left(\vecv{I}R_{0}-\sum_{j=1}^{N_{p}}\frac{A_{j}}{B_{j}} \pmb{\Pi}_{j}^{-1}\right).
\end{split}
\end{equation}

To implement $\tilde{\vecv{V}}_{\varepsilon, N_p}^{-1}$ as a preconditioner, we want to evaluate $\vecv{r} :=\tilde{\vecv{V}}_{\varepsilon, N_p}^{-1}\SLO \vecv{r}_{1}$ for some function $\vecv{r}_{1}\in\vecv{H}_{\times}^{-\frac{1}{2}}(\sdiv,\Gamma)$.

Let $\vecv{r}_{1,h}$ be the discrete approximation of $\vecv{r}_1$ using RWG basis functions. The main difficulty is the discrete evaluation of $\pmb{\phi}^{j} := \pmb{\Pi}_j^{-1}\SLO\vecv{r}_1$, of which the main part is the solution of

\begin{equation}\label{eq:pi_app}
\SLO\vecv{r}_{1} = \left[\vecv{I} + B_j \underbrace{\left(\sgrad \frac{1}{\kappa^{2}_{\varepsilon}}\sdiv - \vscurl \frac{1}{\kappa^{2}_{\varepsilon}}\scurl\right)}_{\mathcal{J}}\right]\pmb{\phi}^j.
\end{equation}

%After integration by parts the weak formulation of
%$\mathcal{J}$ reads:
%
%\begin{align*}
%\begin{split}
%\int_{\Gamma} \mathcal{J}(\pmb\phi^{j}) \cdot \vecv{v} d\Gamma &=\int_{\Gamma} \sgrad\rho^{j} \cdot \vecv{v}d\Gamma\\
%&- \int_{\Gamma}\frac{1}{\kappa_{\varepsilon}^{2}} \scurl \pmb\phi^{j} \cdot \scurl \vecv{v} \;d\Gamma, \\
%\int_{\Gamma} \kappa_{\varepsilon}^{2} \rho^{j} z d\Gamma&= -\int_{\Gamma}\pmb\phi^{j} \cdot \sgrad z d\Gamma
%\end{split}
%\end{align*}
%with $\rho^{j}, z\in H^{-\frac{1}{2}}(\Gamma)$ and $\vecv{v}\in \vecv{H}^{-\half}_{\times}(\scurl, \Gamma)$. Here, in a weak sense the product $\kappa_{\varepsilon}^{2}\rho^{j}$ corresponds to the surface divergence of $\pmb{\phi}^{j}$.

We follow \cite{el2014approximate} to solve this system discretely in the following way: let $z_h$ and $\rho_h^j$ be represented by P1 basis functions, and $\vecv{w}_h, \pmb{\phi}_h^j$ by SNC basis functions.

Then
\eqref{eq:pi_app} is equivalent (in the discrete weak sense) to solving
\begin{align}\label{eq:lambda1_wf}
\begin{cases}  
\int_{\Gamma_{h}} \pmb{\phi}_h^{j}\cdot \vecv{w}_h^{j} d\Gamma_{h} +B_{j}\left( \int_{\Gamma_{h}}\hsgrad \rho_h^{j} \cdot \vecv{w}_h^{j}d\Gamma_{h} \right.\\
\left.- \int_{\Gamma_{h}}\frac{1}{\kappa_{\varepsilon}^{2}} \hscurl\; \pmb\phi_h^{j} \cdot \hscurl\;\vecv{w}_h^{j}d\Gamma_{h} \right) = \int_{\Gamma_{h}} \SLO\vecv{r}_{1,h}\cdot \vecv{w}_h^{j} d\Gamma_{h},\\
\int_{\Gamma_{h}} \kappa_{\varepsilon}^{2}\rho_h^{j} z_h^{j} d\Gamma_{h} +\int_{\Gamma_{h}}\pmb\phi_h^{j} \cdot \hsgrad z_h^{j} d\Gamma_{h} = 0
\end{cases} 
\end{align}
for the discrete representation $\pmb{\phi}^j_h$ of $\pmb{\phi}^j$.

With $\pmb{\phi}_h^j$ computed, we can evaluate with $\vecv{v}_h$ given in a basis of SNC functions:

\begin{align}\label{eq:weak_pade_sum}
\int_{\Gamma_{h}}\vecv{r}_{2,h}\cdot \vecv{v}_h\;d\Gamma_{h} &= R_{0}\left[\int_{\Gamma_{h}}\SLO\vecv{r}_{1,h}\cdot\vecv{v}_h\;d\Gamma_{h}\nonumber\right.\\
&\left.-\frac{1}{R_{0}}\int_{\Gamma_{h}}\left(\sum_{j=1}^{N_{p}}\frac{A_{j}}{B_{j}} \pmb{\phi}_h^{j}\right)\cdot \vecv{v}_h\;d\Gamma_{h}\right].
\end{align}

We find $\vecv{r}_{3,h}$ solving discretely $\pmb{\Lambda}_{2,\varepsilon}\vecv{r}_{3,h} = \vecv{r}_{2,h}$ by
\begin{align}\label{eq:lambda2_wf}
 \int_{\Gamma_{h}} \vecv{r}_{3,h} \cdot \vecv{v}_hd\Gamma_{h}- \int \frac{1}{\kappa_{\varepsilon}^{2}}\hscurl \vecv{r}_{3,h}\cdot \hscurl \vecv{v}_h \;d\Gamma_{h} \nonumber \\
  = \int_{\Gamma_{h}}\vecv{r}_{2,h}\cdot\vecv{v}_h\; d\Gamma_{h}.
\end{align}

Finally, we obtain $\vecv{r}_h = -\pmb{\Theta}_{h}^{-1}\vecv{r}_{3,h}$, where $\pmb{\Theta}_{h}$ is the discrete version of the operator $\pmb{\Theta}$. 
In section \ref{sect:mat_rep}, we demonstrate that with a suitable choice of basis functions, the matrix representation of $\pmb{\Theta}_{h}$ is just the identity matrix.

\subsection{Matricial representation}\label{sect:mat_rep}

To illuminate the steps behind the discrete implementation, we write down the preconditioner in matrix form.  
Using the notation from \cite{el2014approximate}, we define the following matrices:

\begin{equation}
\begin{cases}
\mathbb{G} = \int_{\Gamma_{h}}\vecv{t}\cdot \vecv{r}\; d\Gamma_{h}, \;\; \mathbb{N}_{\varepsilon} = \int_{\Gamma_{h}} \frac{1}{\kappa_{\varepsilon, h}^{2}}\hscurl\vecv{t} \cdot\hscurl\vecv{r}\; d\Gamma_{h}\\
\mathbb{K}_{\varepsilon}= \int_{\Gamma_{h}}\kappa_{\varepsilon, h}^{2}\ell \; \lambda\; d\Gamma_{h},\;\; \mathbb{L} = \int_{\Gamma_{h}} \hsgrad \ell \cdot \vecv{t} \;d\Gamma_{h}, \\
\end{cases}
\end{equation}

where $\vecv{t}$ and $\vecv{r}$ are SNC basis functions and 
$\ell$, $\lambda$ are P1 basis functions. Notice that we have used $\kappa_{\varepsilon, h}$ to indicate that this parameter depends on the local curvature radius.

Now, we associate the coefficient vectors $\vec{\vecv{r}}_{*,h}$ with the functions $\vecv{r}_{*,h}$ (with similar notation for coefficient vectors of other functions of finite-dimensional bases).
By $\SLOh$ we denote the discrete matrix associated with the EFIE operator $\SLO$. 
The system \eqref{eq:lambda1_wf} corresponds to the matrix system

\begin{equation}\label{eq:matrix_system}
\begin{bmatrix}
(\mathbb{G}-B_{j}\mathbb{N}_{\varepsilon}) & B_{j}\mathbb{L} \\
\mathbb{L}^{T} & \mathbb{K}_{\varepsilon} \\
\end{bmatrix} \begin{bmatrix}
\vec{\pmb{\phi}}_{h}^{j}\\
\vec{\rho}_{h}^{j}\\
\end{bmatrix} 
 = \begin{bmatrix}
\SLOh\vec{\vecv{r}}_{1,h}\\
0\\
\end{bmatrix}. 
\end{equation}

Taking the Schur complement we see that $\vec{\pmb{\phi}}_{h}^{j} = \pmb{\Pi}_{j,\varepsilon, h}^{-1}\SLOh\vec{\vecv{r}}_{1,h}$, with
\begin{equation}\label{eq:schur}
\pmb{\Pi}_{j,\varepsilon, h}= \left[\mathbb{G}-B_{j}(\mathbb{N}_{\varepsilon} +\mathbb{L}\mathbb{K}_{\varepsilon}^{-1}(\mathbb{L})^{T})\right].
\end{equation}
The sum in \eqref{eq:weak_pade_sum} is now represented as
$$
\mathbb{G}\vec{\vecv{r}}_{2,h} = R_0\left(\SLOh\vec{\vecv{r}}_{1,h} - \frac{1}{R_0}\mathbb{G}\sum_{j=1}^{N_p}\frac{A_j}{B_j}\vec{\pmb{\phi}}_{h,j}\right),
$$
and the system \eqref{eq:lambda2_wf} is correspondingly solved by
$$
\vec{\vecv{r}}_{3,h} = (\mathbb{G}-\mathbb{N}_{\varepsilon})^{-1}R_0\left(\SLOh\vec{\vecv{r}}_{1,h} - \frac{1}{R_0}\mathbb{G}\sum_{j=1}^{N_p}\frac{A_j}{B_j}\vec{\pmb{\phi}}_{h,j}\right).
$$

We still have to evaluate the inverse of the isomorphism $\pmb{\Theta}$. However, this is trivial in a basis of RWG functions 
since we have that $\pmb{\Theta}(\vecv{RWG}_i) = \vecv{SNC}_i$. Hence, the isomorphism acts on the basis functions but not on the 
discrete coefficients of them. We therefore have that $\vec{\vecv{r}}_{h}=-\vec{\vecv{r}}_{3,h}$. A full preconditioned evaluation 
of the EFIE operator is therefore given as
\begin{align}\label{eq:discrete_precond}
\vec{\vecv{r}}_{h} &= -(\mathbb{G}-\mathbb{N}_{\varepsilon})^{-1}R_0\left(\mathbb{I} - \frac{1}{R_0}\mathbb{G}\sum_{j=1}^{N_p}\frac{A_j}{B_j}\pmb{\Pi}_{j,\varepsilon, h}^{-1}\right)\SLOh\vec{\vecv{r}}_{1,h},\nonumber\\
&:= \vecv{V}^{-1}_{\varepsilon,h, N_p}\SLOh\vec{\vecv{r}}_{1,h},
\end{align}
where $\mathbb{I}$ is the simple discrete identity matrix. While this preconditioner looks complicated at first, all involved operators are simple sparse matrices that are readily available in Maxwell boundary element codes or can be straight forward implemented. Moreover, the solves in the sum can be easily executed in parallel.

To better understand 
how the number of Pad\'{e} terms influences the approximation property, in Fig. \ref{fig:spectrum_prec} we plot the spectrum 
of $\vecv{V}_{\varepsilon, h}^{-1}$ (lower right plot) against that of $\vecv{\tilde{V}}_{\varepsilon, h, N_p}^{-1}$ for different values of 
the number of terms $N_p$. As $N_p$ increases we can see very nicely how the spectrum becomes very similar to the desired spectrum 
even though we do not approximate the MtE operator directly, but an approximation involving the pseudo-differential operator $(\vecv{I} + \mathcal{J})^{1/2}$.

\begin{figure}[h!]
\hspace*{-1cm}
\center
\includegraphics[width=9cm]{./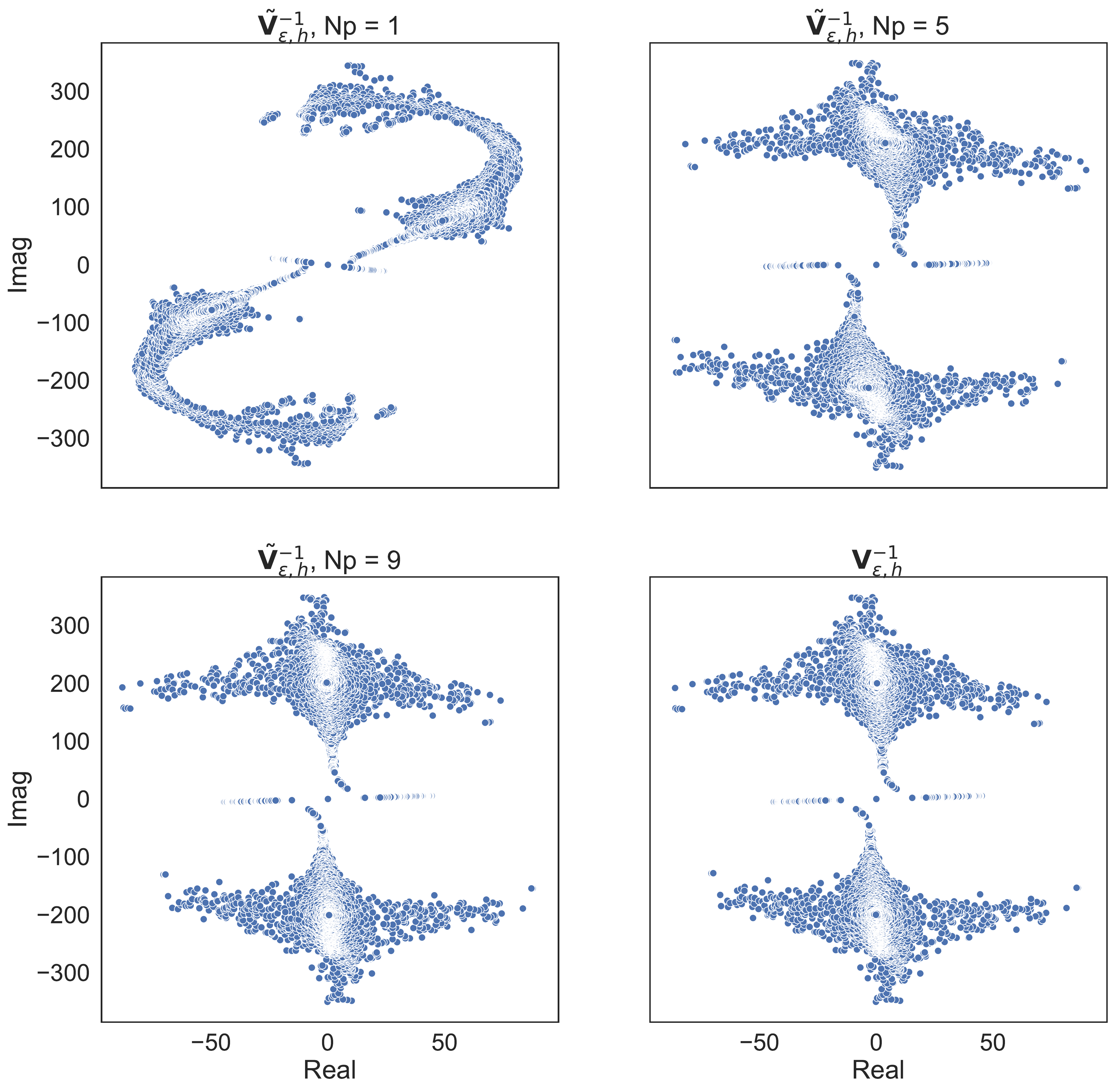}
\caption{Spectral comparison for $\vecv{V}_{\varepsilon,h}^{-1}$ and $\tilde{\vecv{V}}_{\varepsilon,h, N_{p}}^{-1}$. Top left: $N_p = 1$. Top right: $N_p = 5$. Bottom left: $N_p=9$. Bottom right $\vecv{V}_{\varepsilon, h}^{-1}$. In all figures, $\kappa=\pi$.}
\label{fig:spectrum_prec}
\end{figure}

\subsection{Implementational simplification}
\label{sec:simplification}

In this section we propose a simplification of the above preconditioner that we believe is novel and reduces its computational 
effort. In \cite{el2014approximate} the direct sparse solution of the various systems of the form \eqref{eq:matrix_system} was 
proposed. However, depending on the magnitude of the associated Pad\'e coefficients this can be simplified. Consider again the sum
\begin{equation}
\label{eq:coeff_sum}
\mathbb{I} - \frac{1}{R_0}\mathbb{G}\sum_{j=1}^{N_p}\frac{A_j}{B_j}\pmb{\Pi}_{j,\varepsilon, h}^{-1}
\end{equation}
contained in \eqref{eq:discrete_precond} with
$$
\pmb{\Pi}_{j,\varepsilon, h}= \left[\mathbb{G}-B_{j}(\mathbb{N}_{\varepsilon} +\mathbb{L}\mathbb{K}_{\varepsilon}^{-1}(\mathbb{L})^{T})\right].
$$
Let $\beta_j :=A_j/B_j$. In Fig. \ref{fig:pade_coef} the values of $\beta_j$ and $B_j$ are shown for varying $j$. We can see 
heuristically that if $\beta_j$ is large, then $B_j$ becomes small and when $\beta_j$ is small, $B_j$ remains bounded. Let us consider 
those two cases.

If $\beta_j$ is small then we can just discard the corresponding term in \eqref{eq:coeff_sum}. However, if $\beta_j$ is large then 
$B_j$ becomes negligible, and we can apply the simplification:
$$
\pmb{\Pi}_{j,\varepsilon, h}= \left[\mathbb{G}-B_{j}(\mathbb{N}_{\varepsilon} +\mathbb{L}\mathbb{K}_{\varepsilon}^{-1}(\mathbb{L})^{T})\right] \approx \mathbb{G}.
$$
We hence obtain that
\begin{equation}\label{eq:approx_coeffs}
\mathbb{I} - \frac{1}{R_0}\mathbb{G}\sum_{j=1}^{N_p}\frac{A_j}{B_j}\pmb{\Pi}_{j,\varepsilon, h}^{-1}\approx \left(1 - \frac{1}{R_0}\sum_{j\in\mathcal{I}}\frac{A_j}{B_j}\right)\mathbb{I},
\end{equation}
where $\mathcal{I}$ is the set of Pad\'{e} coefficients assumed to be dominant and retained. For all those we perform the 
approximation $\pmb{\Pi}_{j,\varepsilon, h}\approx \mathbb{G}$. Having this simplification, then \eqref{eq:discrete_precond} turns into

\begin{equation*}
\vec{\vecv{r}}_{h} = -K_{\varepsilon,N_p}(\mathbb{G}-\mathbb{N}_{\varepsilon})^{-1}\SLOh\vec{\vecv{r}}_{1,h},
\end{equation*}

where $K_{\varepsilon,N_p}$ is a constant. Since we apply the preconditioner on both sides of the equation (as in \eqref{eq:prec_efie}), we can 
dismiss the constants and just keep $(\mathbb{G}-\mathbb{N}_{\varepsilon})^{-1}$ as a preconditioner. This is not a direct approximation of the 
MtE map anymore, but as we will see it still performs well as preconditioner.

\begin{figure}[h]
    \centering
    \includegraphics[width=9cm]{./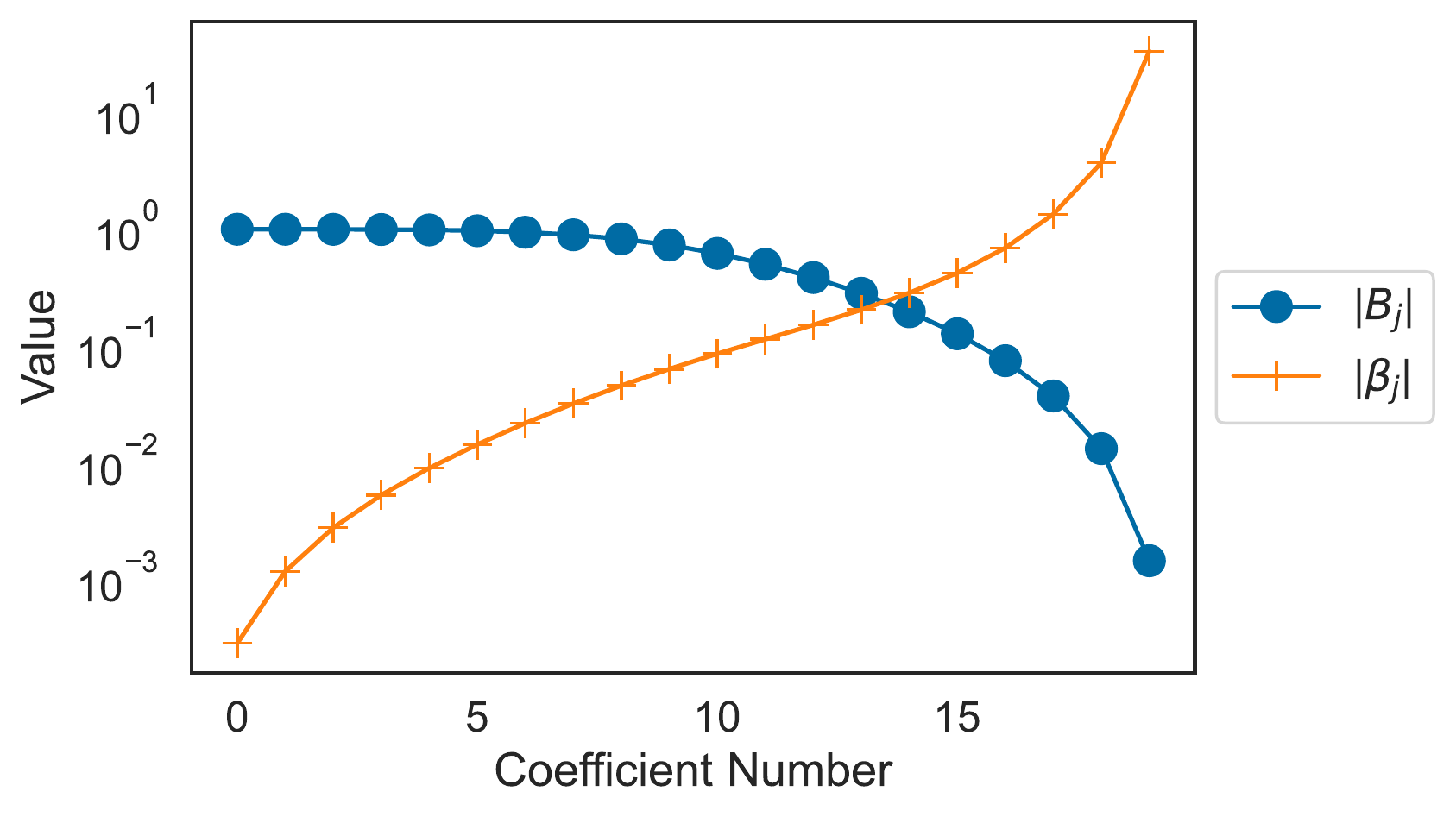}
    \caption{Pad\'e Coefficients $\beta_{j}$ and $B_{j}$ for $N_{p} = 50$.}
    \label{fig:pade_coef}
\end{figure}

\section{Numerical Experiments}\label{sect:NExp}

This section demonstrates the performance of the proposed OSRC preconditioner and some comparisons with other regularisers.  
All the tests were performed using Bempp software \cite{scroggs2017software} on a spherical grid of radius $r=1$. 
In this section we denote by $\tilde{\vecv{V}}_{\varepsilon, h,A,N_p}^{-1}$ the preconditioner obtained by solving the full block systems 
\eqref{eq:matrix_system} with a Pad\'{e} degree $N_p$ and by $\tilde{\vecv{V}}_{\varepsilon, h,B}^{-1}$ the simplified preconditioner  $(\mathbb{G}-\mathbb{N}_{\varepsilon})^{-1}$ as described in Section \ref{sec:simplification}.

\subsection{Validation} 
The OSRC operator was validated using the same method as in \cite{el2014approximate}, where bistatic RCS are calculated using:

\begin{enumerate}
\item Analytic solutions on the unit sphere calculated using spherical harmonics.
\item A direct formulation of the EFIE ($\vecv{V}_{h}^{-1}$).
\item An approximation of $\vecv{V}_{\varepsilon,h}^{-1}$, that we calculated by computing the square roots of the eigenvalues of the matrix that generates $\pmb{\Lambda}_{1,\varepsilon}$.
\item By applying $\tilde{\vecv{V}}_{\varepsilon, h,A,2}^{-1}$.
\end{enumerate}

The first test (Fig. \ref{fig:RCS}) replicates the results obtained in \cite{el2014approximate} and shows the bistatic RCS obtained from the scattering problem 
of an incident electromagnetic plane wave by a PEC unit sphere, for $\kappa = \pi$ and $\kappa = 8 \pi$. The analytic solution and the curve due to $\tilde{\vecv{V}}_h^{-1}$ 
agree up to plotting accuracy.
Moreover, the graphs of $\tilde{\vecv{V}}_{\varepsilon, h,A,2}^{-1}$ and $\vecv{V}_{\varepsilon, h}^{-1}$ overlap each other, and 
both approximate the qualitative behaviour of the bistatic RCS well, though not perfectly. 

\begin{figure}[h!]
\centering
\includegraphics[width=9cm]{./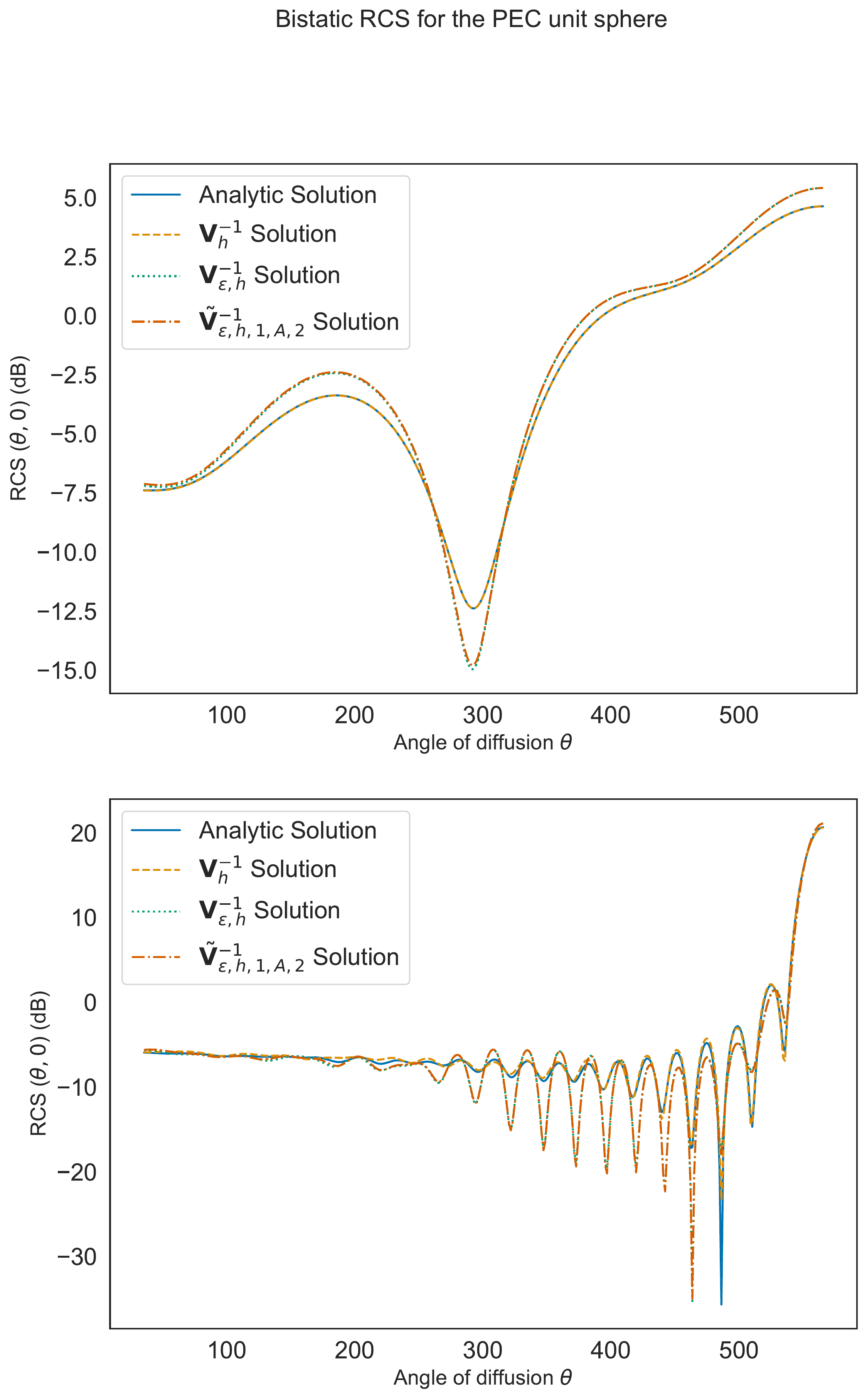}
\caption{Bistatic RCS for the PEC unit sphere illuminated by an incident electromagnetic plane wave at $\kappa = \pi$ (up) and $\kappa = 8 \pi$ (down). }
\label{fig:RCS}
\end{figure}

\subsection{Performance Comparison}

In order to compare the performance of the OSRC preconditioner to others, the following attributes were benchmarked:

\begin{enumerate}
\item[a)] GMRES number of iterations.
\item[b)] Assembly and solving times for: \begin{itemize}
\item Pure direct formulation of the EFIE, denoted by $\SLOh$ (always calculated in the primal grid).
\item EFIE regularised using the Calder\'on Multiplicative Preconditioner (CMP), denoted by $\SLOh^{2}$.
\item EFIE regularised using the Refinement Free Calder\'on Multiplicative Preconditioner (RF-CMP), denoted by RF-$\SLOh^{2}$. We must mention that this preconditioner
requires the inversion of a dense matrix. The authors in \cite{adrian2019refinement} claim that this can be solved by using a multigrid preconditioner that \textbf{we have
not implemented in this work}. Having this in mind, the assembly time recorded in this document should be larger than it could be under 
an optimisation of the method.

It is necessary to mention that in this case, the Preconditioner is designed to be applied in a discretisation of the EFIE
built using RT and NC basis functions as the standard div and curl conforming basis functions. Also, we have solved this system using the conjugate gradient 
algorithm as suggested by the authors in \cite{adrian2019refinement}.

\item EFIE regularised with $\tilde{\vecv{V}}_{\varepsilon, h, A, N_{p}}^{-1}$, with $N_{p}=1, 2$.
\item EFIE regularised with $\tilde{\vecv{V}}_{\varepsilon, h, B}^{-1}$.

\end{itemize}
\end{enumerate}

These tests were performed using $\mathcal{H}$-matrix compression \cite{hackbusch2015hierarchical} with an accuracy of around $10^{-3}$ of the compressed matrices.

As expected, in Fig. \ref{fig:its_cr}, $\tilde{\vecv{V}}_{\varepsilon, h, A, N_p}^{-1}\SLOh$ shows (slightly) better results than 
$\tilde{\vecv{V}}_{\varepsilon, h, B}^{-1}\SLOh$, because the first is a better approximation than the latter. However, in terms
of solving time (table\ref{tab:st_cr}), it could be more convenient to use $\tilde{\vecv{V}}_{\varepsilon, h, B}^{-1}\SLOh$, since it is considerably cheaper to apply. 

Table \ref{tab:at_cr} shows assembly times of the different preconditioners in relationship to the assembly time of the non-preconditioned EFIE. The best result for each column is shown in bold.
We note that the $\tilde{\vecv{V}}_{\varepsilon, h, B}^{-1}$ variant is almost as cheap to assemble as the non-preconditioned system despite still 
being excellent as preconditioner. We notice our non-optimal implementation of the RF-$\SLOh^{2}$ variant. The CMP $\SLOh^{2}$ suffers in terms of 
assembly time from the need to assemble operators on the barycentrically refined grid.
The solution times in Table \ref{tab:st_cr} also confirm the effectiveness of the OSRC type preconditioners and here in particular the $\tilde{\vecv{V}}_{\varepsilon, h, B}^{-1}$ variant.

\begin{figure}[htbp]
\centering
\includegraphics[width=9.6cm]{./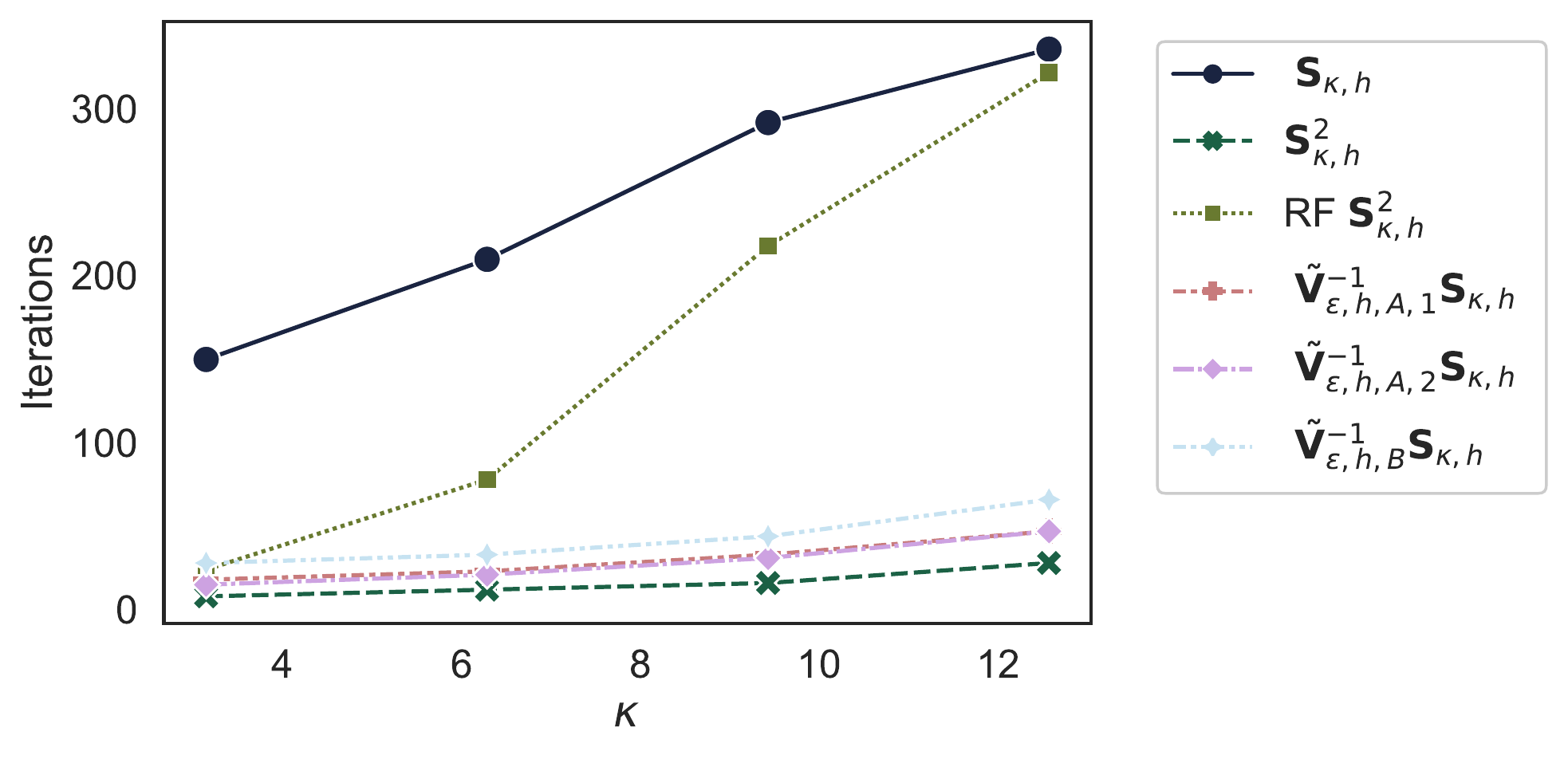}
\caption{Iterations comparison between different EFIE formulations on a grid with constant relation $\kappa \cdot h$.}
\label{fig:its_cr}
\end{figure}

\begin{table}[h!]
\begin{center}
\caption{\label{tab:at_cr}$\vecv{T}(\vecv{R}\SLOh) \mathbin{/} \vecv{T}(\SLOh)$ assembly time ratios comparison between different EFIE formulations on a spherical grid with constant relation $\kappa \cdot h$.}
\begin{tabular}{|c|c|c|c|c|c|}
\hline
 Formulation & $\kappa= \pi$  &   $\kappa= 2\pi$ &   $\kappa= 3\pi$ &    $\kappa= 4\pi$  \\ \hline              
$\SLOh$       &    1.000 &   1.000 &   1.000 &   1.000 \\ \hline
$\SLOh^{2}$       &  10.043 &   8.929 &  13.330 &   11.173  \\ \hline
RF-$\SLOh^{2}$       &  5.644 &  23.132 &  67.077 &  132.672 \\ \hline
 $\tilde{\vecv{V}}_{\varepsilon, h, A,1}^{-1}\SLOh$ &  1.063 &   1.141 &   1.203 &    1.234\\ \hline
 $\tilde{\vecv{V}}_{\varepsilon, h, A,2}^{-1}\SLOh$    &  1.129 &   1.233 &   1.365 &    1.434  \\ \hline
 $\tilde{\vecv{V}}_{\varepsilon, h, B}^{-1}\SLOh$   & \textbf{1.009} &   \textbf{1.014} &   \textbf{1.015} &    \textbf{1.015}  \\ \hline
\end{tabular}
\end{center}
\end{table}

\begin{table}[h!]
\begin{center}
\caption{\label{tab:st_cr}$\vecv{T}(\vecv{R}\SLOh) \mathbin{/} \vecv{T}(\SLOh)$ solving time ratios comparison between different EFIE formulations on a spherical grid with constant relation $\kappa \cdot h$. }
\begin{tabular}{|c|c|c|c|c|c|}
\hline
 Formulation   & $\kappa= \pi$  &   $\kappa= 2\pi$ &   $\kappa= 3\pi$ &    $\kappa= 4\pi$  \\ \hline               
$\SLOh$       &     1.000 &  1.000 &  1.000 &   1.000 \\ \hline
$\SLOh^{2}$       &    2.168 &   1.762 &   1.521 &   2.279\\ \hline
RF-$\SLOh^{2}$       &    2.938 &  17.189 &  41.086 &  80.886\\ \hline
 $\tilde{\vecv{V}}_{\varepsilon, h, A,1}^{-1}\SLOh$ &  0.319 &   \textbf{0.293} &   0.211 &   0.253\\ \hline
 $\tilde{\vecv{V}}_{\varepsilon, h, A,2}^{-1}\SLOh$ & 0.575 &   0.402 &   0.263 &   0.351 \\ \hline
 $\tilde{\vecv{V}}_{\varepsilon, h, B}^{-1}\SLOh$ &  \textbf{0.187} &   0.309 &   \textbf{0.121} &   \textbf{0.200} \\ \hline
\end{tabular}
\end{center}
\end{table}

\subsection{$\vecv{V}_{\varepsilon,h}^{-1}$ performance under mesh refinement.}

\subsubsection{High frequency regime}

Figure \ref{fig:its_vd_hf} compares the various preconditioners for fixed wavenumber $\kappa=\pi$ and decreasing mesh width. While the EFIE without preconditioning suffers from the well known ill-conditioning problems all
other preconditioners keep the number of iteration bounded or only very slowly increasing.

\begin{figure}[htbp]
\centering
\includegraphics[width=9.5cm]{./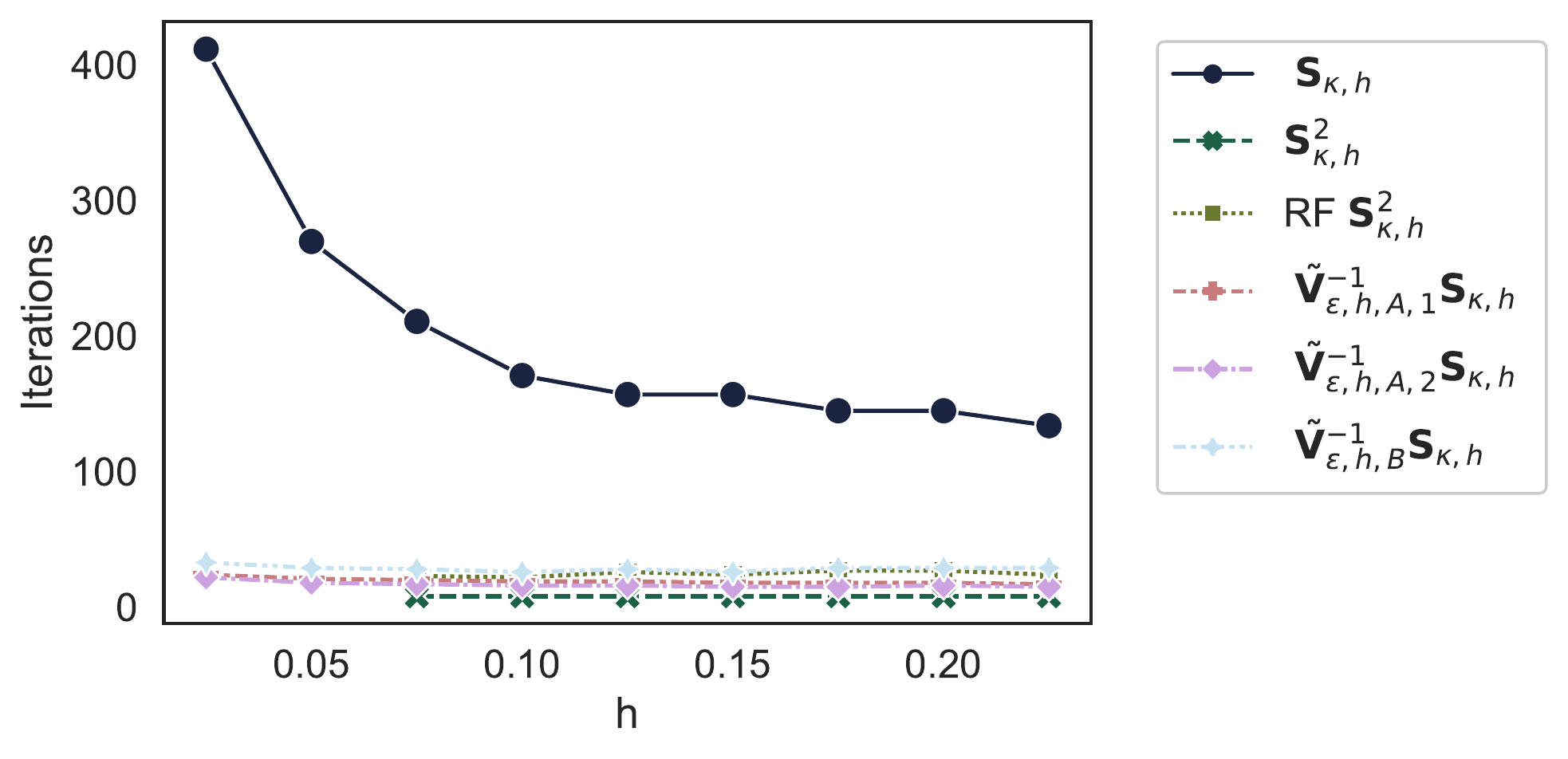}
\caption{Iterations comparison between different EFIE formulations on a grid with varying discretisation, in the high frequency regime.}
\label{fig:its_vd_hf}
\end{figure}

\subsubsection{Low frequency regime}

In this scenario ($\kappa = \pi/10$, Fig. \ref{fig:its_vd_lf}), the regularised systems keep showing a robust behaviour. However, the main difference we observe
is that the CMP and RF-CMP perform better (in terms of iterations) than the MtE preconditioner. This can be explained by the fact that 
the latter is based on a high frequency approximation, whereas, the RF-CMP is based on a low frequency approximation.

\begin{figure}[htbp]
\centering
\includegraphics[width=9.5cm]{./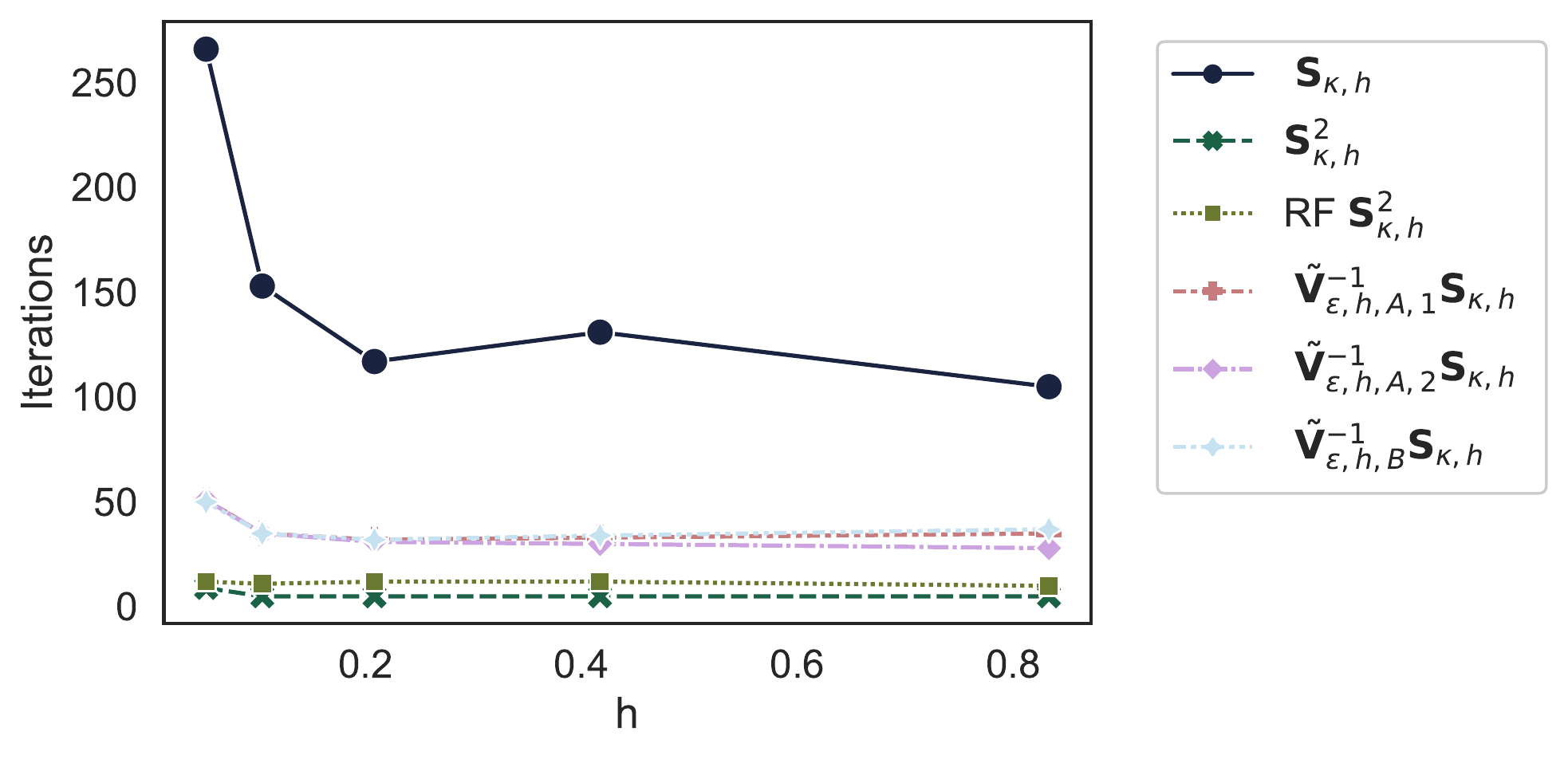}
\caption{Iterations comparison between different EFIE formulations on a grid with varying discretisation, in the low frequency regime.}
\label{fig:its_vd_lf}
\end{figure}

\subsubsection{Time performance}

In terms of time, table \ref{tab:at_vd} shows that in general,  assembling $\tilde{\vecv{V}}_{\varepsilon, h, *,N_p}^{-1}\SLOh$ remains cheap as $h\rightarrow 0$. 
Table \ref{tab:st_vd} shows that the solving time ratios of $\tilde{\vecv{V}}_{\varepsilon, h, *,N_p}^{-1}\SLOh$ improve for smaller $h$ as the number of iterations 
that takes to solve the problem remains approximately constant. In both regards, assembly and solving time, $\tilde{\vecv{V}}_{\varepsilon, h, *,N_p}^{-1}\SLOh$ 
outperforms $\SLOh^2$ (unless the discretisation is very rough).

\begin{table}[h!]
\begin{center}
\caption{\label{tab:at_vd}$\vecv{T}(\vecv{R}\SLOh) \mathbin{/} \vecv{T}(\SLOh)$ assembly time ratios comparison between different EFIE formulations on a spherical grid with varying discretisation.}
\begin{tabular}{|c|c|c|c|c|c|c|}
\hline
$\kappa$ & \multicolumn{2}{c|}{$\pi/10$} & \multicolumn{2}{c|}{$\pi$} \\ \hline
Formulation   &   $h=0.052$ &    $h=0.833$&    $h=0.075$&    $h=0.225$  \\ \hline
$\SLOh$       &   1.000 &   1.000 &   1.000 &   1.000   \\ \hline
$\SLOh^2$       &  12.589 &  9.565 &  11.645 &  10.450  \\ \hline
RF-$\SLOh^2$       &  85.990 &   2.446 &    30.652 &   3.568  \\ \hline
$\tilde{\vecv{V}}_{\varepsilon, h, A,1}^{-1}\SLOh$   &  1.179 &   1.104 &   1.122 &   1.051 \\ \hline
$\tilde{\vecv{V}}_{\varepsilon, h, A,2}^{-1}\SLOh$ &   1.221 &   1.195 &   1.257 &   1.077  \\ \hline
$\tilde{\vecv{V}}_{\varepsilon, h, B}^{-1}\SLOh$  &  \textbf{1.019} &   \textbf{1.049} &   \textbf{1.015} &   \textbf{1.012} \\ \hline
\end{tabular}
\end{center}
\end{table}

\begin{table}[h!]
\begin{center}
\caption{\label{tab:st_vd}$\vecv{T}(\vecv{R}\SLOh) \mathbin{/} \vecv{T}(\SLOh)$ solving time ratios comparison between different EFIE formulations on a spherical grid with varying discretisation.}
\begin{tabular}{|c|c|c|c|c|c|c|}
\hline
$\kappa$ & \multicolumn{2}{c|}{$\pi/10$} & \multicolumn{2}{c|}{$\pi$} \\ \hline
Formulation   &   $h=0.052$ &    $h=0.833$&    $h=0.075$&    $h=0.225$  \\ \hline
$\SLOh$       &   1.000 &   1.000 &   1.000 &   1.000   \\ \hline
$\SLOh^2$       &  1.629 &  1.493 &  1.584 &  2.489  \\ \hline
RF-$\SLOh^2$       &  4.137 &   \textbf{0.783} &    6.040 &   1.636  \\ \hline
$\tilde{\vecv{V}}_{\varepsilon, h, A,1}^{-1}\SLOh$   &  0.422 &   1.104 &   0.222 &   0.338 \\ \hline
$\tilde{\vecv{V}}_{\varepsilon, h, A,2}^{-1}\SLOh$ &   0.552 &   1.006 &   0.316 &   0.457  \\ \hline
$\tilde{\vecv{V}}_{\varepsilon, h, B}^{-1}\SLOh$  &  \textbf{0.180} &   1.430 &   \textbf{0.184} &   \textbf{0.231} \\ \hline
\end{tabular}
\end{center}
\end{table}

\subsection{An example on a less regular surface.}

The same tests were performed on a NASA almond-shaped grid, with a wavenumber of $\kappa = 2\pi$, to see if the condition number boundedness was preserved on 
a more interesting shape. Figure \ref{fig:it_vd_almond} shows that, compared to the non-regularised EFIE, the MtE preconditioner 
allows to solve the problem in considerably less iterations than the non-regularised EFIE, while being substantially cheaper to 
compute (compared to the CMP preconditioners). 

\begin{figure}[htbp]
\centering
\includegraphics[width=9.7cm]{./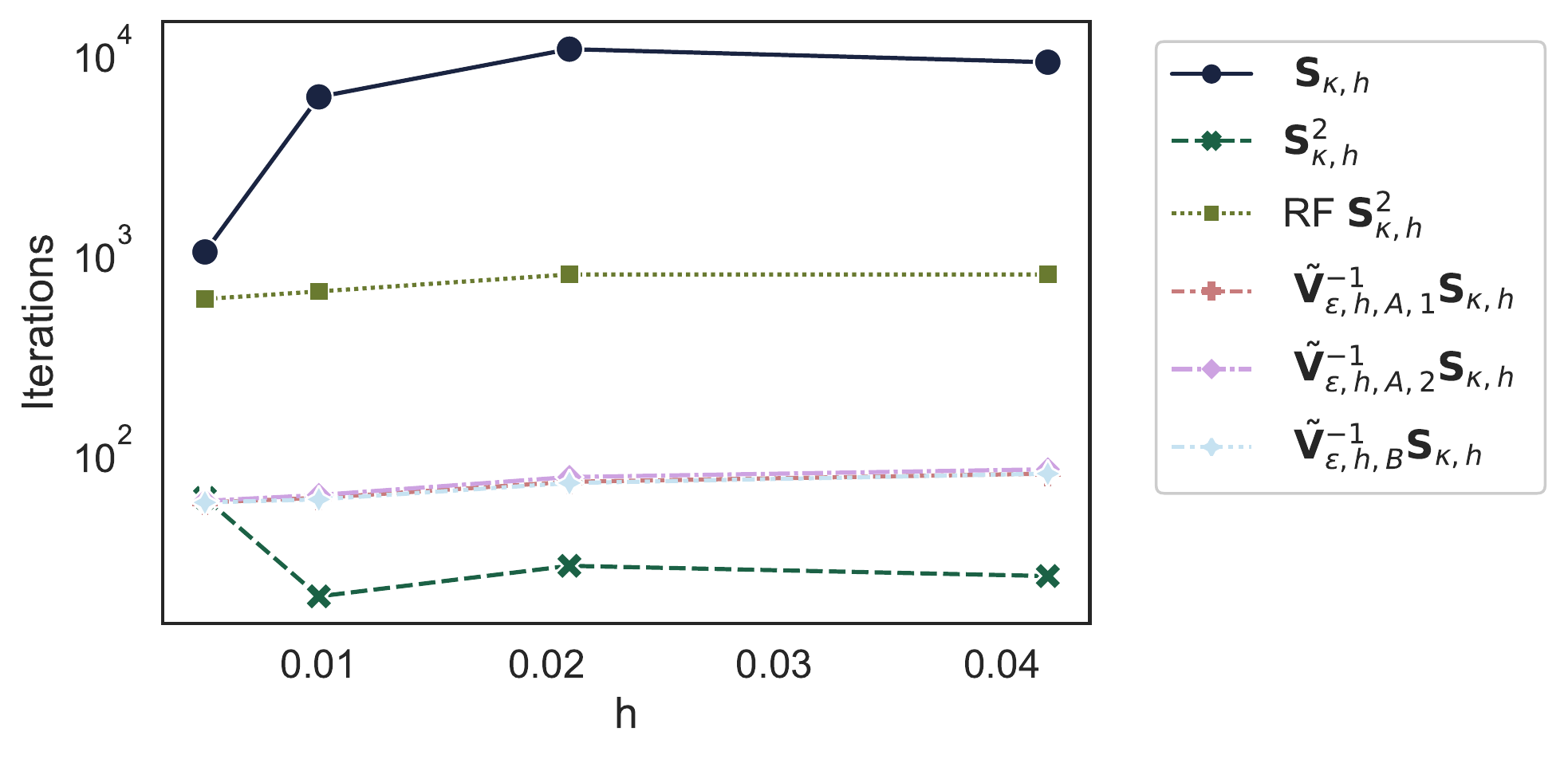}
\caption{Iterations comparison between different EFIE formulations on an almond-shaped grid with varying $h$.}
\label{fig:it_vd_almond}
\end{figure}

\begin{figure}[htbp]
\hspace*{-0.2cm}  
\centering
\includegraphics[width=9.5cm]{./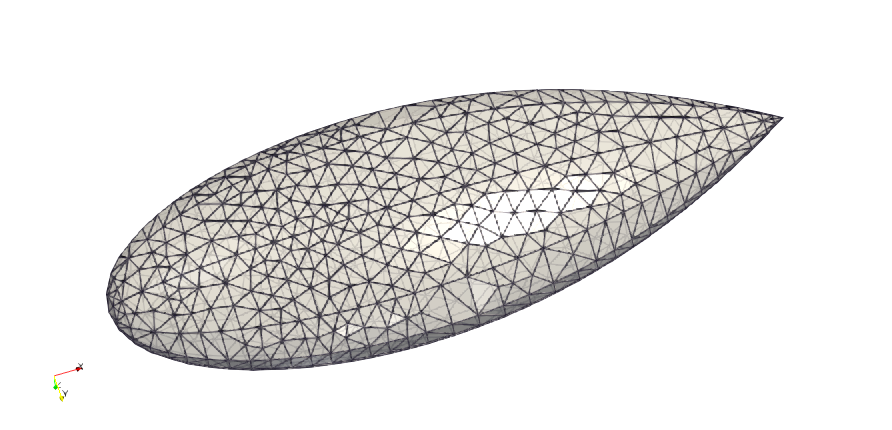}
\caption{Example of a NASA almond grid.}
\label{fig:nasa_almond}
\end{figure}

\subsection{Heads up: the MtE on an open surface}
Finally, in Fig. \ref{fig:its_cylinder} we demonstrate the performance of the MtE preconditioner on an open cylinder 
(see Fig. \ref{fig:cylinder} for the geometry). The formulation is the usual EFIE for screens \cite[section 7.1]{Buffa2003}. As Fig. 
\ref{fig:its_cylinder} shows, the iteration count for the MtE preconditioned version behaves very favourably even compared 
to the CMP adapted to screens.

\begin{figure}[htbp]
\centering
\includegraphics[width=9.7cm]{./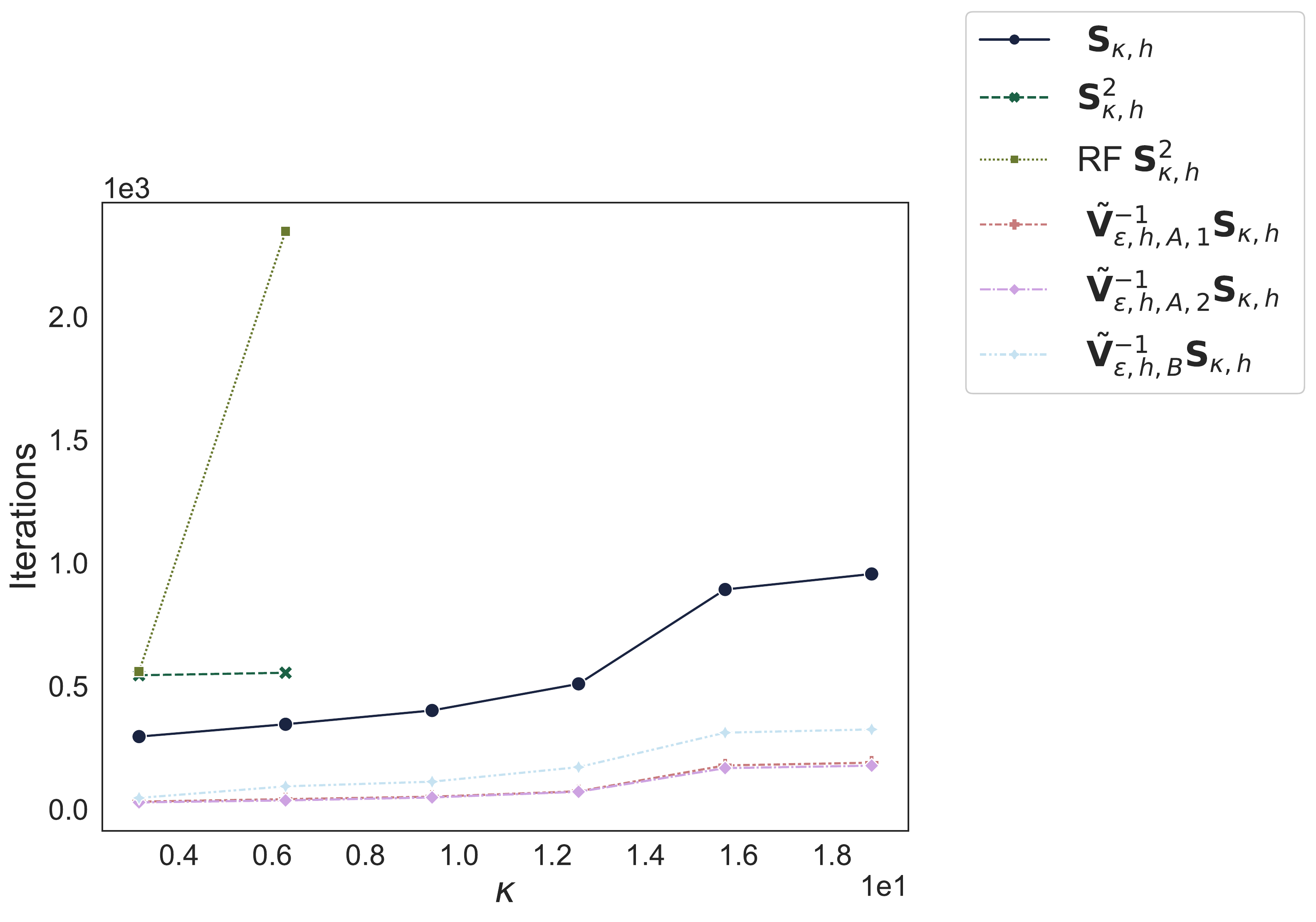}
\caption{Iterations comparison between different EFIE formulations on an open cylinder grid, with varying $\kappa$.}
\label{fig:its_cylinder}
\end{figure}

\begin{figure}[htbp]
    \hspace*{-0.2cm}  
    \centering
    \includegraphics[width=9cm]{./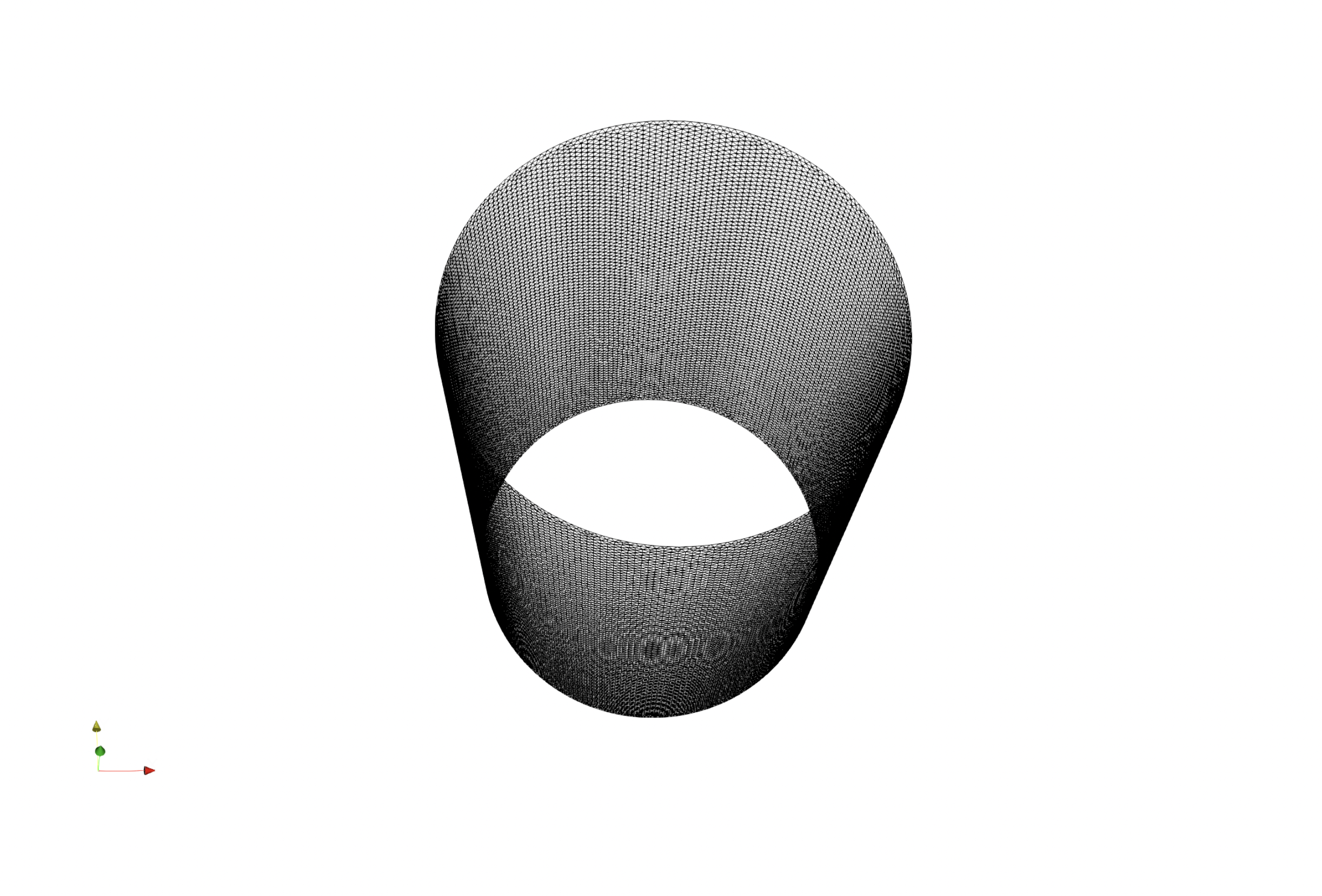}
    \caption{Example of open cylinder grid.}
    \label{fig:cylinder}
    \end{figure} 

\section{Conclusions}\label{sect:CR}

The aim of this paper was to test an approximation of the Magnetic to Electric operator proposed in \cite{el2014approximate} as a preconditioner operator for the EFIE. This operator ($\tilde{\vecv{V}}_{\varepsilon, N_p}^{-1}$) is based on a rational complex Padé approximant of an OSRC ($\vecv{V}_{\varepsilon}^{-1}$) operator, also proposed in \cite{el2014approximate}.  

It was shown that this operator works as a preconditioner for the EFIE and different alternatives for its discretisation 
were proposed and benchmarked. The results from these tests prove the effectiveness of the proposed preconditioner and that 
it also outperforms the standard Calderón Multiplicative Preconditioner. It is also competitive with a refinement free 
version of the CMP, while usually being simpler to implement as it uses just straight forward sparse matrix discretisations 
of surface differential operators that are often already available or easy to implement within boundary element codes.
On top of this, we performed tests on more complex geometries, including an open domain. The results obtained from these tests are
very promising, opening the door to the application of this method to more complex open surfaces.

\bibliography{mybibfile}{}
\bibliographystyle{plain}

\end{document}